\documentclass[12pt]{article}

\usepackage[a4paper]{geometry}

\usepackage[utf8]{inputenc}
\usepackage[english]{babel}

\usepackage{biblatex}
\usepackage{amsmath}
\usepackage{amsthm}
\usepackage{amssymb}

\newtheorem{theorem}{Theorem}
\newtheorem{lemma}{Lemma}
\newtheorem{definition}{Definition}
\newtheorem{proposition}{Proposition}
\newtheorem{corollary}{Corollary}

\newtheorem{remark}{Remark}

\title{On Integral Linear Constraints on Convex Cones}
\author{Emil Vladu \\ Alexandre Megretski \\ Anders Rantzer \thanks{The authors are with the Laboratory for Information and Decision Systems at Massachusetts Institute of Technology and the Department of Automatic Control at Lund University. This work was partially supported by the Wallenberg AI, Autonomous Systems and Software Program (WASP) funded by the Knut and Alice Wallenberg Foundation (2024.0459), as well as the European
Research Council (Advanced Grant 834142).}}
\date{}
\begin{document}
\maketitle

\begin{abstract}
    In this paper, we consider integral linear constraints and the dissipation inequality with linear supply rates for certain sets of trajectories confined pointwise in time to a convex cone which belongs to a finite-dimensional normed vector space. Such constraints are then shown to be satisfied if and only if a bounded linear functional exists which satisfies a conic inequality. This is analogous to the typical situation in which a quadratic supply rate over the entire space is related to a linear matrix inequality. A connection is subsequently drawn precisely to linear-quadratic control: by proper choice of cone, the main results can be applied to produce a known $L_1$-gain analogue to the bounded real lemma in positive systems theory, as well as a non-strict version of the Kalman-Yakubovich-Popov Lemma in linear-quadratic control.  
\end{abstract}

\section{Introduction}
In systems theory and control, various constraints on the dynamics of a linear time-invariant (LTI) system can be verified by solving certain algebraic matrix equations or inequalities. Important examples include verifying stability by means of Lyapunov equations, establishing an optimal quadratic cost over all input signals (LQR) \cite{kalman1960contributions} and verifying $L_2$-gain bounds by solving Riccati equations \cite{doyle1988state} or linear matrix inequalities (LMIs) \cite{gahinet1994lmi}. Importantly, symmetry is a recurring feature of these equations and their associated solutions, and the costs featured in the problem formulations are generally quadratic, thus giving rise to the dominating linear-quadratic paradigm in systems theory and control.

By contrast, more recently an area called positive systems theory has gained popularity. Positive systems, which are characterized by nonnegative inputs producing nonnegative outputs, occur naturally in such areas as biology, economy or Markov models, and they possess remarkable properties useful especially for analysis and control of large-scale systems, e.g., \cite{farina2000positive}\cite{rantzer2018tutorial} and the references therein. Examples of such properties include the existence of positive diagonal Lyapunov solutions as both necessary and sufficient for stability, the existence of positive vectors solving entrywise inequalities to verify upper bounds on the $L_1$/$L_\infty$-gain \cite{briat2013robust}\cite{ebihara20111} and positive diagonal solutions \cite{tanaka2011bounded} or nonsymmetric solutions to LMIs \cite{ebihara2014lmi} to certify a given upper bound on the $H_\infty$ norm. Note in particular the recurring absence of symmetry, as well as the occassional occurrence of linear cost functions rather than quadratic ones.

In general, the relationship between the many kinds of equations and inequalities certifying stability and performance for positive systems, and those doing the same for general systems, is complex. However, arguably the most straightforward connection can be seen by considering the following equivalent condition for stability of a Metzler matrix $A \in \mathbb{R}^{n \times n}$ $$\exists p > 0 \; \mathrm{such \; that \; } Ap < 0$$ as well as the one for general matrices $$\exists P \succ 0 \; \mathrm{such \; that \; } A^T P + PA \prec 0,$$ where $>$ and $\succ$ are induced by the nonnegative orthant and positive semidefinite cone, respectively. The common denominator here is an operator $L$ on either $\mathbb{R}^n$ or $\mathcal{S}^n$ such that $\frac{d}{dt} x = L(x)$ leaves the above cones invariant, and under this assumption, asymptotic stability is equivalent to the existence of a positive vector solving a conic inequality. This idea is well-known, e.g., \cite{gowda2009z} for a formalization in a finite-dimensional Hilbert space framework. The assumption is sometimes known as cross-positivity in the literature and has been studied thoroughly over the years, e.g., \cite{schneider1970cross}.

The above way of identifying linear-conic structure in standard Lyapunov stability analysis has very recently been exploited also in other areas of control to provide a unifying framework for various results. For example, \cite{bamieh2024linear} leverages a result on linear-cone duality on general Banach spaces in order to connect differential Riccati equation solutions to various finite-horizon linear-quadratic phenomena such as LQR. Another example is given by \cite{pates2024optimal} which instead passes through the Bellman equation to draw parallels between a recent positive systems result \cite{rantzer2022explicit} and LQR. Matrix dynamical systems associated normally with covariance matrices are central to both works, and have a long history in control, e.g., \cite{gattami2009generalized} where they were used for stochastic control. Note finally the related yet distinct body of literature which also exploits cone theory in the context of control: generalizations of positive systems theory which establish precisely how and for which type of cone its appealing properties manifest in a wider context, e.g., \cite{angeli2003monotone} \cite{papusha2015analysis}\cite{shen2016some}\cite{tanaka2013dc}\cite{shen2017input}.

The main purpose of the present paper is to identify a similar linear-conic structure in some already published results on both positive systems and linear-quadratic theory. More precisely, the satisfaction of various integral linear constraints and the dissipation inequality with linear supply rate for trajectories confined to a cone is shown to be equivalent to the existence of a bounded linear functional satisfying a conic inequality. The latter is perhaps most fruitfully compared to \cite{willems1971least}, in which a similar connection is made between LMIs and linear systems satisfying the dissipation inequality with quadratic supply rate. Further, there is a great literature on integral quadratic constraints, e.g., \cite{megretski1997system} and the references therein, and not unexpectedly integral linear constraints appear also in positive systems theory, e.g., \cite{briat2013robust}. There is a solid theoretical framework for dissipative systems \cite{willems1972dissipative1}, and in particular when the system is linear and the supply rate is quadratic \cite{willems1972dissipative2}. Similarly, there exists a parallel dissipation theory subsequently developed for positive systems \cite{haddad2005stability} in which the supply rate is linear. However, the present paper considers the dissipation inequality only to see what can be said without any nonnegativity requirement on the storage function as in standard dissipativity.

The main results of the present paper are subsequently exploited to derive first a non-strict variant of a known result in positive systems theory on $L_1$-gains \cite{briat2013robust}\cite{ebihara20111} (Proposition \ref{prop:bounded_real_l1}), and second a non-strict version of the Kalman-Yakubovich-Popov (KYP) Lemma \cite{kalman1963lyapunov}\cite{yakubovich1962solution}\cite{popov1961absolute} (Proposition \ref{prop:kyp}). Various versions, generalizations and proofs to the latter have been presented over the years, some of which are algebraic in their nature and others dynamical, e.g., \cite{rantzer1996kalman}\cite{megretski2010kyp}. In this paper, the aim is not swift theorem verification but rather an attempt at shedding additional light on an important result in the control literature, this time from a linear-cone perspective. To this end, a corresponding linear-cone analog to the non-strict KYP Lemma (Theorem \ref{thrm:kyp_linear_cones}) is presented, in which the desired connection between dynamical constraints and conic inequalities is observed in a more basic setting. In connection to this, controllability on a cone, or K-controllability in short, is defined. Note that such notions already exist in the positive systems literature, where their relationship to standard controllability has been studied extensively, e.g., \cite{coxson1987positive}\cite{ohta1984reachability}\cite{valcher1996controllability}\cite{valcher2009reachability} and the references therein. The KYP Lemma now readily follows from Theorem \ref{thrm:kyp_linear_cones} after an application of a crucial rank one decomposition (Theorem \ref{thrm:rank_one}). This latter result is novel to the best of the authors' knowledge and breaks down trajectories on the positive semidefinite cone which satisfy the matrix dynamical system in \cite{bamieh2024linear} to components satisfying standard LTI system dynamics, thereby bridging the gap between the linear-cone and the linear-quadratic domain.

The outline of the paper is as follows: in Section \ref{Section:Preliminaries}, we define and recall relevant mathematical notions. Section \ref{Section:Results} provides the results of the paper and Section \ref{sec:Proofs} the associated proofs. Section \ref{Section:Conclusions} subsequently concludes the paper. Finally, we note that the contents of this paper are based on material in \cite{vladu2025performance}.


\section{Preliminaries} \label{Section:Preliminaries}
In this section, we explain the notation used throughout the paper and recall basic functional analytical concepts.

We denote by $\mathbb{R}$ ($\mathbb{C}$) the set of real (complex) numbers, and by $\mathbb{R}^n$ ($\mathbb{C}^n$) and $\mathbb{R}^{n \times m}$ ($\mathbb{C}^{n \times m}$) the set of $n$-dimensional vectors and $n \times m$-matrices, respectively, with entries in $\mathbb{R}$ ($\mathbb{C}$). The identity matrix will be denoted by $I$, with context determining its dimension. A square matrix $A$ is said to be Hurwitz if all its eigenvalues have negative real part, and Metzler if all its offdiagonal elements are nonnegative. We denote by $\mathcal{S}^n$ the set of symmetric matrices in $\mathbb{R}^{n \times n}$ and by $\mathcal{S}_{+}^n$ the positive semidefinite cone therein which induces the partial order $\succeq$. Similarly, $\mathbb{R}_+^{n}$ is the set of entrywise nonnegative vectors which induces the entrywise partial order $\geq$. For Metzler matrices $A$, it is well known that being Hurwitz is equivalent to $-A^{-1} \geq 0$ as well as the existence of a $p > 0$ such that $Ap < 0$, e.g., \cite{rantzer2018tutorial}.

In this paper, we shall refer to a vector space $X$ over $\mathbb{R}$ equipped with a norm $\lVert \cdot \rVert_X$ as a normed space. Convergence, limits and continuity are defined in the usual $\varepsilon-\delta$ sense as in real analysis. For a linear transformation $L: X \rightarrow Y$ with normed spaces $X, Y$, we say that $L$ is bounded if there exists an $M > 0$ such that $\lVert L(x) \rVert_Y \leq M \lVert x \rVert_X$ for all $x \in X$. We denote by $X^*$ the dual of $X$, i.e., the set of all bounded linear functionals ($x^* : X \rightarrow \mathbb{R}$), and by $B(X, Y)$ the set of all bounded linear transformations $L: X \rightarrow Y$. The adjoint $L^*$ corresponding to $L$ is defined as the transformation $L: Y^* \rightarrow X^*$ such that $L^* (y^*) (x) = y^* (L(x))$ for all $x \in X$. When the norm is induced by an inner product, it is well known by the Riesz representation theorem that $X^*$ can be identified with $X$. For an excellent introduction to the theory of normed spaces, see e.g., \cite{luenberger1997optimization}.

A cone $K \subseteq X$ is a set for which $x \in K$ implies $\alpha x \in K$ for all $\alpha \geq 0$. A convex (and pointed, i.e., $K \cap -K = \{0\}$) cone induces a preorder (partial order) $\succeq_K$ such that $x \succeq_K y$ if and only if $x - y \in K$; $x \succ_K y$ if and only if $x - y \in \mathrm{Int}(K)$, where Int denotes the interior. The associated dual cone is defined as $K^* = \{ x^* \in X^* \mid x^* (x) \geq 0 \; \forall x \in K \}$, and in finite dimensions for closed $K$ the interior of the dual is given by $\mathrm{Int}(K^* ) = \{ x^* \in X^* \mid x^* (x) > 0 \; \forall x \in K , x \ne 0 \}$, noting that $\mathrm{Int}(K^* )$ is nonempty if in addition $K$ is also pointed, e.g., \cite[p. 64]{boyd2004convex}. Examples of closed, convex, pointed cones with nonempty interior are the nonnegative orthant in $\mathbb{R}^n$ and the positive semidefinite cone in $\mathcal{S}^n$. For more on finite-dimensional cones, see e.g., \cite{berman1994nonnegative}\cite{barker1981theory}.

Given a finite-dimensional normed space $X$ and an open interval $I \subseteq \mathbb{R}$, we say that $f: I \rightarrow X$ is differentiable at $t_0 \in I$ if the limit $\lim_{h \rightarrow 0} \frac{f(t_0 + h) - f(t_0 )}{h}$ exists, and we denote it variously by $\dot{f}(t_ 0)$ or $\frac{d}{dt}f(t_0 )$; if the limit exists for all $t_0 \in I$ we say that $f$ is differentiable. In the interest of simplicity, we interpret $\int_I f(t) \; \mathrm{dt}$ in the Riemann sense, for which integration on closed intervals $I$ generalizes in a natural way to $X$-valued functions. Improper integrals are carried out in the principal value sense, i.e., $\int_{- \infty}^{\infty} f(t) \; \mathrm{dt} = \lim_{T \rightarrow \infty} \int_{-T}^{T} f(t) \; \mathrm{dt}$, provided the limit exists. We note that standard intuition applies well, as most of the basic results from real analysis persist in this setting, e.g., \cite{gordon1991riemann}. Two such examples of which we shall make use include the fundamental theorem of calculus and interchanging the order of integration with a linear operator. Finally, given $E, L \in B(X, Y)$, we say that a piecewise continuous $f$ satisfies the differential equation $\frac{d}{dt}E(f) = L(f)$ on some interval $I$ if $E(f(t)) - E(f(t_0 ))= \int_{t_0}^t L(f(\tau )) \; \mathrm{d\tau}$ for all $t, t_0 \in I$ or, equivalently, if in addition to being continuous, $E(f)$ is differentiable and the differential equation holds on those open intervals on which $f$ is continuous. We assume in the remainder of the paper that all functions of time are continuous a.e.

We close this section by recalling the following fact.
\begin{lemma} \label{lem:image_partition}
    Given a matrix $Q \in \mathcal{S}^{n + m}$ such that $Q \succeq 0$ with corresponding partition $$Q = \begin{pmatrix}
        Q_{nn} & Q_{nm} \\ Q_{nm}^T & Q_{mm}
    \end{pmatrix},$$ then $\mathrm{Im} (Q_{nm}) \subseteq \mathrm{Im} (Q_{nn})$.
\end{lemma}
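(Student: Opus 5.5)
The plan is to pass to orthogonal complements and prove the dual statement $\ker(Q_{nn}) \subseteq \ker(Q_{nm}^T)$. Since $Q_{nn}$ is symmetric, $\mathrm{Im}(Q_{nn}) = \ker(Q_{nn})^{\perp}$. We also have $\mathrm{Im}(Q_{nm}) = \ker(Q_{nm}^T)^{\perp}$. So the inclusion $\ker(Q_{nn}) \subseteq \ker(Q_{nm}^T)$ gives $\ker(Q_{nm}^T)^\perp \subseteq \ker(Q_{nn})^\perp$, which is exactly $\mathrm{Im}(Q_{nm}) \subseteq \mathrm{Im}(Q_{nn})$.

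To establish the kernel inclusion, fix $x \in \mathbb{R}^n$ with $Q_{nn}x = 0$ and an arbitrary $y \in \mathbb{R}^m$. For every $t \in \mathbb{R}$, positive semidefiniteness of $Q$ applied to the vector $(x, ty)$ gives
$$0 \le \begin{pmatrix} x \\ ty\end{pmatrix}^T Q \begin{pmatrix} x \\ ty\end{pmatrix} = x^T Q_{nn} x + 2t\, y^T Q_{nm}^T x + t^2 y^T Q_{mm} y = 2t\, y^T Q_{nm}^T x + t^2 y^T Q_{mm} y .$$
The right-hand side is a quadratic in $t$ with no constant term. If the linear coefficient $y^T Q_{nm}^T x$ were nonzero, we could take $t$ small in magnitude with sign opposite to that coefficient and make the expression negative. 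Hence $y^T Q_{nm}^T x = 0$. Since $y$ was arbitrary, $Q_{nm}^T x = 0$, which is the required inclusion.

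An alternative route is to factor $Q = R^T R$ with $R = (R_1\ R_2)$, so that $Q_{nn} = R_1^T R_1$ and $Q_{nm} = R_1^T R_2$. Then $\mathrm{Im}(Q_{nm}) \subseteq \mathrm{Im}(R_1^T) = \mathrm{Im}(R_1^T R_1)$. The perturbation argument avoids having to justify that last equality of images, so I would present that one.

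There is no real obstacle here; the only step needing care is the sign argument on the quadratic in $t$ when $y^T Q_{mm} y$ may be positive. That is handled by taking $|t|$ sufficiently small, so that the linear term dominates the quadratic one.
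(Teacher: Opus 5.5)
Your proof is correct and follows essentially the same route as the paper's. Both use $\mathrm{Im}(Q_{nn}) = \ker(Q_{nn})^{\perp}$ and evaluate the quadratic form of $Q$ at a vector whose first block lies in $\ker(Q_{nn})$, which forces the cross term $2\,w^T Q_{nm} z$ to vanish; the only cosmetic difference is that the paper argues by contradiction and scales the $\ker(Q_{nn})$ component up, whereas you argue directly and scale the $y$ component down, which is equivalent by homogeneity.
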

\begin{proof}
    Suppose on the contrary that this were not the case. Then there exists a $z \in \mathbb{R}^m$ such that $Q_{nm} z \notin \mathrm{Im}(Q_{nn}) = \big( \mathrm{Im}(Q_{nn})^\bot \big) ^\bot$. As such, there is $w \in \mathbb{R}^n$ such that $w^T Q_{nn} v = 0$ for all $v \in \mathbb{R}^n$, yet $w^T Q_{nm} z \ne 0$. But this would imply that 
    \begin{equation*}
    \begin{aligned}
    \begin{pmatrix}
        w \\ z
    \end{pmatrix}^T Q \begin{pmatrix}
        w \\ z
    \end{pmatrix} &= w^T Q_{nn} w + 2 w^T Q_{nm} z + z^T Q_{mm} z \\ &= 2 w^T Q_{nm} z + z^T Q_{mm} z < 0
    \end{aligned}
    \end{equation*}
    by choosing the sign of $w$ properly and making it sufficiently large, a contradiction as $Q \succeq 0$.
\end{proof}


\section{Results} \label{Section:Results}
In this section, we present some general results on integral linear constraints on cones. At the heart lies the following finite-dimensional phenomenon for which a non-strict and a strict version are provided.
\begin{theorem} \label{thrm:sep_non-strict}
    Let the finite-dimensional normed spaces $Z, X$ and the convex cone $K \subseteq Z$ be given. Then for any given $m^* \in Z^*$ and $L \in B(Z, X)$ such that $L(K) = X$, the following conditions are equivalent:
    \begin{enumerate}
        \item[(i)] There exists $p^* \in X^*$ such that \begin{equation} \label{eq:lmi}
            L^* (p^* ) - m^* \preceq_{K^*} 0.
        \end{equation}
        \item[(ii)] $m^* (z_0 ) \geq 0$ for every $z_0 \in K$ such that $L(z_0 ) = 0$.
    \end{enumerate}
\end{theorem}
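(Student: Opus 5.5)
The direction (i)$\Rightarrow$(ii) is immediate. If $p^*$ satisfies \eqref{eq:lmi}, then $m^* - L^*(p^*) \in K^*$. Hence for every $z_0 \in K$ with $L(z_0) = 0$ we get $m^*(z_0) \geq L^*(p^*)(z_0) = p^*(L(z_0)) = 0$.

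For (ii)$\Rightarrow$(i) I would avoid separating a possibly non-closed cone, since $K$ is not assumed closed. Instead I would use a Hahn--Banach argument on a sublinear "value function". Define
\begin{equation*}
f(x) = \inf\{ m^*(z) \mid z \in K,\; L(z) = x \}, \qquad x \in X.
\end{equation*}
The key steps, in order, are as follows.

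First, $f(x) < +\infty$ for every $x$, since $L(K) = X$ makes the feasible set nonempty.

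Second, $f$ is positively homogeneous and convex, i.e.\ sublinear. This follows because $K$ is a convex cone and $L$, $m^*$ are linear: if $z_1, z_2$ are feasible for $x_1, x_2$, then $z_1 + z_2 \in K$ is feasible for $x_1 + x_2$, and $\alpha z_1$ is feasible for $\alpha x_1$ when $\alpha > 0$.

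Third, $f(0) = 0$. Taking $z = 0$ gives $f(0) \leq 0$, and (ii) says precisely that $m^*(z) \geq 0$ for all feasible $z$ at $x = 0$.

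The step I expect to be the crux is showing that $f$ never takes the value $-\infty$; this is exactly where the surjectivity assumption $L(K) = X$ is used, acting as a Slater-type condition. Suppose $f(x) = -\infty$ for some $x$. Since $-x$ is also in the domain, $f(-x) < +\infty$, and subadditivity gives $f(0) \leq f(x) + f(-x) = -\infty$. This contradicts $f(0) = 0$, so $f$ is a real-valued sublinear functional on $X$.

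By the Hahn--Banach theorem, which in finite dimensions also follows from the existence of a subgradient at $0$ of a finite convex function, there is a linear functional $p^*$ on $X$ with $p^* \leq f$. It is automatically bounded because $X$ is finite-dimensional, so $p^* \in X^*$.

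Finally, for any $z \in K$ we have $p^*(L(z)) \leq f(L(z)) \leq m^*(z)$. That is, $(m^* - L^*(p^*))(z) \geq 0$ for all $z \in K$, so $L^*(p^*) - m^* \preceq_{K^*} 0$, which is \eqref{eq:lmi}.
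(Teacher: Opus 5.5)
Your proof is correct, and it takes a genuinely different route from the paper's.

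The paper argues geometrically in $X \times \mathbb{R}$. It separates the convex set $\{(L(z), -m^*(z)) : z \in K\}$ from the ray $\{(0,v) : v > 0\}$, which are disjoint by (ii). It then does a case analysis on the separating functional $(x,r) \mapsto p^*(x) + qr$ and level $c$. It shows $c = 0$ and excludes $q < 0$. It excludes the vertical case $q = 0$ because $p^* \neq 0$ with $p^* \circ L \le 0$ on $K$ would contradict $L(K) = X$. Dividing by $q > 0$ then gives the multiplier.

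You instead pass to the value function of the conic program $\inf\{m^*(z) : z \in K,\ L(z) = 0\}$ under right-hand-side perturbations, and apply the analytic (dominated-extension) Hahn--Banach theorem. Non-verticality is then automatic, since you are building a linear minorant of a real-valued sublinear function. The hypothesis $L(K) = X$ enters as a Slater-type qualification that makes $f$ finite on all of $X$. It gives $f < +\infty$ directly, and $f > -\infty$ together with (ii). This plays the role of the paper's exclusion of $q = 0$.

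At bottom both proofs rest on the same finite-dimensional separation principle: a subgradient at $0$ is a supporting hyperplane to the epigraph. Your packaging makes the role of each hypothesis transparent and identifies $p^*$ as a Lagrange multiplier. It also isolates what would be needed beyond finite dimensions: only boundedness of $p^*$, which holds if $f$ is bounded above near $0$. The paper's version is more elementary, and its template adapts to the strict case in Theorem~\ref{thrm:sep_strict}, with a compactness argument replacing surjectivity.

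One correction to your motivation: closedness of $K$ is not an obstacle for the geometric route. Any two disjoint nonempty convex sets in a finite-dimensional space admit a non-strict separating hyperplane, and the paper's proof uses exactly this without ever needing $K$ closed.
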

\begin{theorem} \label{thrm:sep_strict}
    Let the finite-dimensional normed spaces $Z, X$ and the closed, convex and pointed cone $K \subseteq Z$ be given. Then for any given $m^* \in Z^*$ and $L \in B(Z, X)$, the following conditions are equivalent:
    \begin{enumerate}
        \item[(i)] There exists $p^* \in X^*$ such that \begin{equation*}
        L^* (p^* ) - m^* \prec_{K^*} 0.
        \end{equation*}
        \item[(ii)] $m^* (z_0 ) > 0$ for every nonzero $z_0 \in K$ such that $L(z_0 ) = 0$.
    \end{enumerate}
\end{theorem}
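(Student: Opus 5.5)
Theorem \ref{thrm:sep_strict} is a strict theorem of the alternative, so I plan to prove it by separating a compact convex set from a subspace.

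\emph{(i) $\Rightarrow$ (ii).} This direction is direct. Let $z_0 \in K$ be nonzero with $L(z_0) = 0$. Then $m^* - L^*(p^*) \in \mathrm{Int}(K^*)$. By the characterization of $\mathrm{Int}(K^*)$ recalled in Section \ref{Section:Preliminaries} (valid since $K$ is closed), this gives $m^*(z_0) - p^*(L(z_0)) > 0$. Since $L(z_0) = 0$, we get $m^*(z_0) > 0$.

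\emph{(ii) $\Rightarrow$ (i).} We need $p^*$ with $(m^* - L^*(p^*))(z) > 0$ for all nonzero $z \in K$. It suffices to check this on a compact base of $K$. Since $K$ is closed, convex and pointed, $\mathrm{Int}(K^*)$ is nonempty, so fix $e^* \in \mathrm{Int}(K^*)$. Set $B = \{ z \in K \mid e^*(z) = 1 \}$. Every nonzero $z \in K$ has $e^*(z) > 0$, so $z/e^*(z) \in B$. The set $B$ is convex and closed. It is also bounded: otherwise take $z_k \in B$ with $\lVert z_k \rVert \to \infty$; a limit point $\bar z$ of $z_k / \lVert z_k \rVert$ lies in $K$, has norm one, and satisfies $e^*(\bar z) = 0$, which contradicts $e^* \in \mathrm{Int}(K^*)$. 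Hence $B$ is compact (finite dimensions). If $B$ is empty, then $K = \{0\}$, $K^* = Z^*$ and $\mathrm{Int}(K^*) = Z^*$, so (i) is trivial with $p^* = 0$.

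Next consider the compact convex set $C = \{ (L(z), m^*(z)) \mid z \in B \} \subseteq X \times \mathbb{R}$. Condition (ii) says exactly that $C$ does not meet the closed convex set $D = \{0\} \times (-\infty, 0]$: a point $z \in B$ with $L(z) = 0$ is a nonzero element of $K \cap \ker L$, so $m^*(z) > 0$. Strict separation of a compact convex set from a disjoint closed convex set in the finite-dimensional space $X \times \mathbb{R}$ yields $(q^*, \lambda) \in X^* \times \mathbb{R}$ and $c \in \mathbb{R}$ with $q^*(L(z)) + \lambda m^*(z) > c$ for $z \in B$ and $\lambda s < c$ for all $s \le 0$. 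The second family of inequalities forces $\lambda \ge 0$ and $c > 0$.

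The main obstacle is ensuring $\lambda > 0$ so that we can normalize. If $\lambda = 0$, then $q^*(L(z)) > c > 0$ on $B$, i.e. $L^*(q^*) \in \mathrm{Int}(K^*)$. In that case replace $(q^*, \lambda)$ by $(q^*, \lambda) + t(0, 1)$ for small $t > 0$. Since $B$ is compact, $m^*$ is bounded on $B$, so the new pair still gives a strictly positive functional on $B$, now with positive coefficient on $m^*$. (Alternatively, apply the separation directly to $C$ and the subspace $\{0\}\times\mathbb{R}$ intersected appropriately.) So we may take $\lambda > 0$, divide by $\lambda$, and set $p^* = -q^*/\lambda$. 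This gives $(m^* - L^*(p^*))(z) > c/\lambda > 0$ on $B$, hence on all nonzero $z \in K$ by scaling. By the interior characterization this is $m^* - L^*(p^*) \in \mathrm{Int}(K^*)$, i.e. (i).
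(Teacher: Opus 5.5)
Your proof is correct and follows the paper's argument. You strictly separate the compact convex image of a cross-section of $K$ from a closed ray in $X\times\mathbb{R}$, force $c>0$ and a nonnegative multiplier, handle the degenerate case (your $\lambda=0$, the paper's $q=0$) by adding a small multiple of $m^*$ using compactness, then normalize and rescale. The only difference is the cross-section. The paper uses the norm sphere $K\cap\{\lVert z\rVert_Z=1\}$, which is not convex, so it passes to the convex hull and uses pointedness to show that hull points are still nonzero elements of $K$. Your base $\{z\in K \mid e^*(z)=1\}$ is already convex, so pointedness enters only through the existence of $e^*\in\mathrm{Int}(K^*)$, and you also treat the trivial case $K=\{0\}$ explicitly.
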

\begin{proof}
    See Section \ref{sec:Proofs}.
\end{proof}

Although the nature of Theorem \ref{thrm:sep_non-strict} and Theorem \ref{thrm:sep_strict} is finite-dimensional, it does have bearing on functions of time. In order to see this, let $L, X, K$ be given as above, define the three sets 
\begin{equation*}
\begin{aligned}
H_l &= \cup_{[t_0 , t_1]} \scriptstyle \{z: [t_0 , t_1] \rightarrow K \mid z(t) = z_0 \mathrm{\; for \; some \; } z_0 \in K \mathrm{\; such \; that \; } L(z_0) = 0\} \\ 
H_u &= \cup_{[t_0 , t_1]} \scriptstyle \{z: [t_0 , t_1] \rightarrow K \mid \exists \; x: [t_0 , t_1] \rightarrow X \mathrm{\; such \; that \;} \dot{x} = L(z) \} \\
H_v &= \cup_{[t_0 , t_1]} \scriptstyle \{z: [t_0 , t_1] \rightarrow K \mid \exists \; x: [t_0 , t_1] \rightarrow X \mathrm{\; such \; that \;} \dot{x} = L(z) \mathrm{\; and \;} x(t_0 ) = x(t_1 ) \}
\end{aligned}
\end{equation*}
and denote by $H_z$ any subset of all $x$ such that $\dot{x} = L(z)$ given $z \in H_u$. The following then holds.
\begin{corollary} \label{cor:nondym}
    Let the finite-dimensional normed spaces $Z, X$ and the convex cone $K \subseteq Z$ be given. Then for any set $H$ constrained as $H_l \subseteq H \subseteq H_u$ with an associated family $H_z$ and any $m^* \in Z^*$ and $L \in B(Z, X)$ such that $L(K) = X$, the following conditions are equivalent:
    \begin{enumerate}
        \item[(i)] There exists $p^* \in X^*$ such that \begin{equation*}
        L^* (p^* ) - m^* \preceq_{K^*} 0.
        \end{equation*}
        \item[(ii)] There exists a continuous $V: X \rightarrow \mathbb{R}$ with $V(0) = 0$ such that for all $z \in H$ and $x \in H_z$,
        \begin{equation*}
        V(x(t_0)) + \int_{t_0}^{t_1} m^*(z(t)) \; \mathrm{dt} \geq V(x(t_1)).
        \end{equation*}
    \end{enumerate}
    If further $H_l \subseteq H \subseteq H_v$, then the following condition is also equivalent to condition (i) and (ii):
    \begin{enumerate}
        \item[(iii)] For all $z \in H$, $$\int_{t_0}^{t_1} m^* (z(t)) \; \mathrm{dt} \geq 0.$$
    \end{enumerate}
\end{corollary}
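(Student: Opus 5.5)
The plan is to route every implication through Theorem \ref{thrm:sep_non-strict}. Its condition (ii) is a statement about constant trajectories, and constant trajectories are exactly what $H_l$ contains. The cycle of implications will be (i) $\Rightarrow$ (ii) $\Rightarrow$ (i). When $H \subseteq H_v$, we add (i) $\Rightarrow$ (iii) $\Rightarrow$ (i).

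\emph{(i) $\Rightarrow$ (ii).} Given $p^*$ with $L^*(p^*) - m^* \preceq_{K^*} 0$, take $V = p^*$. It is linear, hence continuous with $V(0)=0$. For $z \in H \subseteq H_u$ and $x \in H_z$ we have $\dot x = L(z)$ in the integral sense of the preliminaries. Applying the bounded linear functional $p^*$ and interchanging it with the integral gives
\begin{equation*}
p^*(x(t_1)) - p^*(x(t_0)) = \int_{t_0}^{t_1} p^*(L(z(t)))\,\mathrm{dt} = \int_{t_0}^{t_1} L^*(p^*)(z(t))\,\mathrm{dt}.
\end{equation*}
Since $z(t) \in K$, the conic inequality gives $L^*(p^*)(z(t)) \le m^*(z(t))$ pointwise. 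Integrating this bound yields the dissipation inequality. The same computation handles (i) $\Rightarrow$ (iii): if $z \in H_v$ then $x(t_0)=x(t_1)$, so the left side vanishes and $\int m^*(z) \ge 0$.

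\emph{(ii) $\Rightarrow$ (i) and (iii) $\Rightarrow$ (i).} By Theorem \ref{thrm:sep_non-strict}, which uses $L(K)=X$, it suffices to show that $m^*(z_0) \ge 0$ for every $z_0 \in K$ with $L(z_0)=0$. For (iii) this is immediate: the constant trajectory $z \equiv z_0$ on $[0,1]$ lies in $H_l \subseteq H$, so $m^*(z_0) = \int_0^1 m^*(z)\,\mathrm{dt} \ge 0$.

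For (ii) the issue is which $x$ are available in $H_z$. Since $H_z$ is only ``any subset'' of the solutions, I will read it as containing at least one solution $x$ of $\dot x = L(z) = 0$, which is then a constant $x \equiv x_0$. Take the constant trajectory $z \equiv z_0$ on $[0,T]$ and such an $x$. The dissipation inequality reads $V(x_0) + T\,m^*(z_0) \ge V(x_0)$, so $m^*(z_0) \ge 0$. The storage terms cancel regardless of the value of $V$, which is why no nonnegativity of $V$ is needed. Theorem \ref{thrm:sep_non-strict} then supplies $p^*$.

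The main obstacle is this interpretive point about $H_z$ in the step (ii) $\Rightarrow$ (i). The argument needs $H_z$ to be nonempty for the constant trajectories in $H_l$. I would make that assumption explicit; it is natural, since every $z \in H_u$ admits some solution $x$. If $H_z$ could be empty the implication would fail vacuously. The rest is bookkeeping with the integral-form definition of the ODE and the interchange of $p^*$ with the Riemann integral.
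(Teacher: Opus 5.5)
Your proof is correct and follows the paper's argument essentially step for step. For (i)$\Rightarrow$(ii)/(iii) you take $V = p^*$ and integrate the pointwise conic inequality, and for the converses you feed constant trajectories from $H_l$ into Theorem~\ref{thrm:sep_non-strict}, just as the paper does. The nonemptiness of $H_z$ for constant $z$ that you flag is also used tacitly in the paper's proof (``for any $x \in H_{\hat z}$''), so making it explicit is a fair clarification rather than a deviation.
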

\begin{proof}
    See Section \ref{sec:Proofs}.
\end{proof}

Thus far, there has been no mention of dynamics. This changes now as we apply Corollary \ref{cor:nondym} to sets $H$ constrained by differential equations. First, however, we make the following systems theoretical definition in keeping with \cite{willems1971least}.
\begin{definition} \label{def:dissipativity}
        Let the finite-dimensional normed spaces $Z, X$ and the convex cone $K \subseteq Z$ be given. We say that the pair $E, L \in B(Z, X)$ satisfies the dissipation inequality on $K$ w.r.t. $w: K \rightarrow \mathbb{R}$ if there exists a continuous function $V: X \rightarrow \mathbb{R}$ with $V(0) = 0$ such that 
    \begin{equation} \label{eq:dis_ineq}
    V(E(z(t_0))) + \int_{t_0}^{t_1} w(z(t)) \; \mathrm{dt} \geq V(E(z(t_1)))
    \end{equation}
    for all $t_1 \geq t_0$ and all $z : [t_0 , t_1 ] \rightarrow K$ such that $\frac{d}{dt} E(z) = L(z)$ on $[t_0 , t_1 ]$.
\end{definition}

A consequence of Corollary \ref{cor:nondym} is the following.
\begin{corollary} \label{cor:dissipation}
        Let the finite-dimensional normed spaces $Z, X$ and the convex cone $K \subseteq Z$ be given. Then for any $m^* \in Z^*$ and $E, L \in B(Z, X)$ such that $L(K) = X$, the following conditions are equivalent:
    \begin{enumerate}
        \item[(i)] There exists $p^* \in X^*$ such that \begin{equation*}
        L^* (p^* ) - m^* \preceq_{K^*} 0.
        \end{equation*}
        \item[(ii)] $(E, L)$ satisfies the dissipation inequality on $K$ w.r.t. $m^*$.
    \end{enumerate}
    In addition, if condition (ii) holds, then the function $V$ in Definition \ref{def:dissipativity} can be chosen in $X^*$.
\end{corollary}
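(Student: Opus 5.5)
The plan is to reduce Corollary \ref{cor:dissipation} to Corollary \ref{cor:nondym}, while also giving a direct argument for (i)$\Rightarrow$(ii) that shows $V$ can be taken in $X^*$. Given $z:[t_0,t_1]\to K$ with $\frac{d}{dt}E(z)=L(z)$, set $x=E(z)$. By the paper's definition of solution, $x(t_1)-x(t_0)=\int_{t_0}^{t_1}L(z(\tau))\,\mathrm{d\tau}$. Hence $z\in H_u$ with associated trajectory $x$. Let $H$ be the set of all such $z$, and let $H_z=\{E(z)\}$.

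\textbf{(i)$\Rightarrow$(ii).} Take $V=p^*\in X^*$, which is continuous and satisfies $V(0)=0$. Interchanging $p^*$ with the integral gives
\begin{equation*}
p^*(E(z(t_1)))-p^*(E(z(t_0)))=\int_{t_0}^{t_1}L^*(p^*)(z(t))\,\mathrm{dt}.
\end{equation*}
Since $z(t)\in K$ and $L^*(p^*)-m^*\preceq_{K^*}0$, the integrand satisfies $L^*(p^*)(z(t))\le m^*(z(t))$ pointwise. This yields \eqref{eq:dis_ineq} with $w=m^*$, and it already establishes the final claim that $V$ may be chosen in $X^*$.

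\textbf{(ii)$\Rightarrow$(i).} I do not think Corollary \ref{cor:nondym} can be invoked verbatim. Its $H$ must contain $H_l$, the constant trajectories $z\equiv z_0$ with $z_0\in K$ and $L(z_0)=0$. Such a constant $z$ solves $\frac{d}{dt}E(z)=L(z)$: both sides vanish, because $E(z)$ is constant and $L(z_0)=0$. So $H_l\subseteq H$ holds after all. The potential mismatch is that the $V$ in Definition \ref{def:dissipativity} is only required to work for the particular pairs $(z,E(z))$. That is exactly the setting of Corollary \ref{cor:nondym} with $H_z=\{E(z)\}$, since $H_z$ is allowed to be any subset. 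Corollary \ref{cor:nondym} then gives (i).

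As a safety net in case that bookkeeping fails, I would argue directly via Theorem \ref{thrm:sep_non-strict}. Fix $z_0\in K$ with $L(z_0)=0$ and take the constant trajectory on $[0,T]$. Because $E(z)$ is constant, the $V$ terms cancel, and \eqref{eq:dis_ineq} reduces to $T\,m^*(z_0)\ge 0$. Thus condition (ii) of Theorem \ref{thrm:sep_non-strict} holds. Together with $L(K)=X$, that theorem gives $p^*$ satisfying \eqref{eq:lmi}.

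The main obstacle is only checking that constant trajectories in $\ker L\cap K$ are admissible solutions of the differential equation under the paper's a.e.-continuous integral definition. This is immediate, since both sides of the integral identity are zero. After that, everything reduces to Theorem \ref{thrm:sep_non-strict}.
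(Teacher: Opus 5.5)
Your proposal is correct and follows the paper's route: the paper also takes $H$ to be all $K$-valued solutions of $\frac{d}{dt}E(z)=L(z)$ with $H_z=\{E(z)\}$, notes $H_l\subseteq H\subseteq H_u$, applies Corollary \ref{cor:nondym}, and gets $V=p^*\in X^*$ from the (i)$\Rightarrow$(ii) construction. Your direct arguments (integrating $L^*(p^*)(z)\le m^*(z)$, and testing constant trajectories in $\ker L\cap K$ before invoking Theorem \ref{thrm:sep_non-strict}) simply unpack the proof of Corollary \ref{cor:nondym}, so your initial hedge that it cannot be invoked verbatim is unnecessary.
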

\begin{proof}
    See Section \ref{sec:Proofs}. 
\end{proof}

Finally, we consider what can be said for sets of trajectories converging to the origin. For this purpose, we define the following cone analog to the standard concept of controllability in keeping with e.g., \cite{valcher1996controllability}.
\begin{definition} \label{def:controllability}
     Let the finite-dimensional normed spaces $Z, X$ and the convex cone $K \subseteq Z$ be given. $E, L \in B(Z, X)$ is then said to be controllable on $K$ if for every $x_0, x_1 \in E(K)$ there is $t_1 \geq 0$ and a continuous $z: [0, t_1 ] \rightarrow K$ such that $\frac{d}{dt} E(z) = L(z)$ and $E(z(0 )) = x_0, E(z(t_1 )) = x_1$.
\end{definition}

The following can be said to constitute a cone analog to the KYP Lemma, cf. Proposition \ref{prop:kyp}.
\begin{theorem} \label{thrm:kyp_linear_cones}
    Let the finite-dimensional normed spaces $Z, X$ and the convex cone $K \subseteq Z$ be given. Suppose now that the pair $E, L \in B(Z, X)$ is controllable on $K$ and that $E(K)$ has nonempty interior. Then for any given $m^* \in Z^*$, the following conditions are equivalent:
    \begin{enumerate}
        \item[(i)] There exists $p^* \in X^*$ such that \begin{equation*}
        L^* (p^* ) - m^* \preceq_{K^*} 0.
        \end{equation*}
        \item[(ii)] For all $z : \mathbb{R} \rightarrow K$ such that $\frac{d}{dt} E(z) = L(z)$ and $\int_{-\infty}^{\infty} \lVert z(t) \rVert_Z \mathrm{dt} < \infty$, $$\int_{-\infty}^{\infty} m^* (z(t)) \; \mathrm{dt} \geq 0.$$
        \item[(iii)] $m^* (z_0 ) \geq 0$ for every $z_0 \in K$ such that $L(z_0 ) = 0$.
    \end{enumerate}
\end{theorem}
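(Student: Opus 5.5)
The plan is to prove the cycle (i)$\Rightarrow$(ii)$\Rightarrow$(iii)$\Rightarrow$(i). The last implication will reduce to Theorem \ref{thrm:sep_non-strict}, once controllability and the nonempty interior of $E(K)$ have been used to recover its surjectivity hypothesis $L(K)=X$.

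\emph{(i)$\Rightarrow$(ii).} Condition (i) says $m^*(z) - p^*(L(z)) \geq 0$ for all $z \in K$. For an admissible $z$ we then get
\begin{equation*}
\int_{-T}^{T} m^*(z(t))\,\mathrm{dt} \geq \int_{-T}^{T} p^*(L(z(t)))\,\mathrm{dt} = p^*\big(E(z(T))\big) - p^*\big(E(z(-T))\big),
\end{equation*}
where the equality uses the fundamental theorem of calculus and interchanges integration with $p^*$. It remains to show $E(z(\pm T)) \to 0$.
\begin{itemize}
\item Since $\lVert L(z)\rVert_X \leq \lVert L\rVert\,\lVert z\rVert_Z$ is integrable, $E(z(t)) = E(z(0)) + \int_0^t L(z)$ has limits as $t \to \pm\infty$.
\item Since $\lVert E(z(t))\rVert_X \leq \lVert E\rVert\,\lVert z(t)\rVert_Z$ is integrable on $\mathbb{R}$, these limits must be $0$.
\end{itemize}
Letting $T \to \infty$ in the principal value sense gives (ii).

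\emph{(ii)$\Rightarrow$(iii).} Fix $z_0 \in K$ with $L(z_0)=0$. The constant trajectory is not integrable, so I would build one with compact support.
\begin{itemize}
\item Note $0 \in K$ and $E(0)=0$, so $0, E(z_0) \in E(K)$ and the zero trajectory is admissible.
\item By controllability, there is a continuous $z_a$ on $[0,t_a]$ steering $E$ from $0$ to $E(z_0)$, and a continuous $z_b$ on $[0,t_b]$ steering $E$ from $E(z_0)$ back to $0$.
\item Concatenate: $z=0$ for $t<0$, then $z_a$, then the constant $z_0$ on an interval of length $T$, then $z_b$, then $z=0$ afterwards.
\end{itemize}
At each junction $E(z)$ matches, and on the middle piece $\frac{d}{dt}E(z) = 0 = L(z_0)$. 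Hence the integral form of the differential equation holds globally for this piecewise continuous $z$. It is bounded with compact support, so it is integrable. Condition (ii) then gives $c + T\,m^*(z_0) \geq 0$ for every $T \geq 0$, where the constant $c$ collects the contributions of $z_a$ and $z_b$. Letting $T\to\infty$ forces $m^*(z_0)\geq 0$.

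\emph{(iii)$\Rightarrow$(i).} This is the step I expect to be the main obstacle: showing $L(K)=X$, after which Theorem \ref{thrm:sep_non-strict} applies verbatim.
\begin{itemize}
\item $L(K)$ is a convex cone. Suppose $L(K)\neq X$.
\item Then its closure is also not $X$. Otherwise, in finite dimensions the convex set $L(K)$ would have the same interior as its closure, namely $X$, and would equal $X$.
\item Separating the closure of $L(K)$ from a point outside it gives a nonzero $q^* \in X^*$ with $q^*(L(z)) \leq 0$ for all $z\in K$.
\item Along any trajectory, $q^*(E(z(t))) - q^*(E(z(t_0))) = \int_{t_0}^t q^*(L(z)) \leq 0$, so $q^*\circ E(z)$ is nonincreasing.
\item Pick $x_0$ in the interior of $E(K)$ and $v$ with $q^*(v)>0$. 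For small $\varepsilon>0$, the point $x_1 = x_0+\varepsilon v$ still lies in $E(K)$.
\item Controllability gives a trajectory from $x_0$ to $x_1$, yet $q^*(x_1) > q^*(x_0)$, contradicting monotonicity.
\end{itemize}
Thus $L(K)=X$, and (iii) is exactly condition (ii) of Theorem \ref{thrm:sep_non-strict}, which yields (i). The care points are the separation argument for a possibly non-closed cone and the use of the interior of $E(K)$ to move in a direction that increases $q^*$.
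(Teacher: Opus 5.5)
Your proposal is correct and follows essentially the same route as the paper. The paper likewise rules out $L(K)\neq X$ by separating $L(K)$ with a nonzero functional and steering, via controllability, from an interior point of $E(K)$ to a nearby point that increases that functional; it proves (ii)$\Rightarrow$(iii) by the same controllability-based concatenation with a long constant stretch at $z_0$, and it closes with Theorem~\ref{thrm:sep_non-strict}. Your direct integration in (i)$\Rightarrow$(ii) is what the paper obtains by appealing to Corollary~\ref{cor:dissipation} with $V=p^*$, and your argument that $E(z(\pm T))\to 0$ (the limits exist because $L(z)$ is integrable, and vanish because $E(z)$ is integrable) justifies a step the paper only asserts as $z(\pm\infty)=0$.
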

\begin{proof}
    See Section \ref{sec:Proofs}.
\end{proof}

We next give some remarks on the above results.
\begin{remark}
    The heart of both Theorem \ref{thrm:sep_non-strict} and Theorem \ref{thrm:sep_strict} is a separating hyperplane argument. Although such statements exist also for infinite-dimensional spaces (Hahn-Banach), one of the sets involved is then required to possess nonempty interior in the case of non-strict inequality, something that does not generally hold in this setting. Similarly, compactness becomes too severe an assumption in the strict case.
\end{remark}
\begin{remark}
    The above results are fundamentally about connecting the existence of elements in the dual that satisfy a conic inequality to integral linear constraints on sets of trajectories confined to a cone. Surprisingly, however, dynamics and differential constraints appear not to play an essential role in this phenomenon: if $(E, L)$ satisfies the dissipation inequality, then a solution to (\ref{eq:lmi}) proves that (\ref{eq:dis_ineq}) holds for many other sets $H$ of trajectories due to Corollary \ref{cor:nondym}, so long as $H_l \subseteq H \subseteq H_u$. Examples include the set of all $z: [t_0 , t_1 ] \rightarrow K$ such that there exists an $\hat{E} \in B(Z, X)$ such that $\frac{d}{dt} \hat{E}(z) = L(z)$, or indeed the dynamically disconnected set $H_u$ itself.
\end{remark}
\begin{remark}
    When $Z = \mathbb{R}^{n + m}$, $X = \mathbb{R}^n$, $L(x, u) = Ax + Bu$ and $E(x, u) = x$, Corollary \ref{cor:dissipation} collapses to a statement about standard LTI systems $\dot{x} = Ax + Bu$ relevant to systems theory and control. The statement can then be compared to \cite{willems1971least}, which connects quadratic supply rates on all of $\mathbb{R}^{n + m}$, as opposed to linear ones on cones, to an LMI, corresponding to the conic inequality (\ref{eq:lmi}). Similarly, Corollary \ref{cor:nondym} is comparable to part of \cite[Theorem 2]{willems1971least} but additionally includes the algebraic condition (i).
\end{remark}
\begin{remark}
    The above results concerning trajectories all involve non-strict inequalities and rely on Theorem \ref{thrm:sep_non-strict}. However, analogous results with strict inequality can be obtained in a similar fashion from Theorem \ref{thrm:sep_strict} instead. The main benefit in this case is that the assumption $L(K) = X$ vanishes, but in exchange additional assumptions on the cone are incurred.
\end{remark}


\subsection{Applications} \label{ref:Applications}
Next, we proceed to apply the above results in order to obtain and thereby connect two seemingly unrelated results in the control literature. In the first result, we choose $K$ as the nonnegative orthant in $\mathbb{R}^{n + m}$ and regain a non-strict variant of a known result for positive systems on $L_1$-gains reminiscent of the bounded real lemma \cite{briat2013robust}\cite{ebihara20111}.
\begin{proposition} \label{prop:bounded_real_l1}
    Let $\gamma > 0$ be given and consider the system $\dot{x} = Ax + Bu$ with $A$ Metzler and $B \in \mathbb{R}_{+}^{n \times m}$ such that $Bu > 0$ for some $u \geq 0$. The following conditions are equivalent:
    \begin{enumerate}
        \item[(i)] $A$ is Hurwitz and $$\sup_{\substack{u \in L_1^m [0, \infty ), u \geq 0 \\ u \ne 0, x(0) = 0}} \frac{\lVert x \rVert _1}{\lVert u \rVert _1} \leq \gamma$$
        \item[(ii)] There exists $p > 0$ such that \begin{equation*} \begin{aligned}
            p^T A + \mathbf{1}_{n}^T &\leq 0 \\
            p^T B - \gamma \mathbf{1}_{m}^T &\leq 0
        \end{aligned}
        \end{equation*}
    \end{enumerate}
\end{proposition}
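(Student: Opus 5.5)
We will apply Theorem \ref{thrm:kyp_linear_cones} (or Corollary \ref{cor:dissipation}) with $Z=\mathbb{R}^{n+m}$, $X=\mathbb{R}^n$, $K=\mathbb{R}_+^{n+m}$, $E(x,u)=x$, $L(x,u)=Ax+Bu$, and the linear supply rate $m^*(x,u)=\gamma\mathbf{1}_m^Tu-\mathbf{1}_n^Tx$. Since $K^*=\mathbb{R}_+^{n+m}$, condition (i) of those results reads $(p^TA,\;p^TB)-(-\mathbf{1}_n^T,\;\gamma\mathbf{1}_m^T)\le 0$. This is exactly the pair of inequalities in (ii), except for the requirement $p>0$.

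We first check the hypotheses. Because $Bu_0>0$ for some $u_0\ge 0$, any $v\in\mathbb{R}^n$ can be written as $v=A x+Bu$ with $x\ge0,u\ge0$: take $x=\epsilon e_i$ large multiples to get $Ae_i$, and note $B(tu_0)$ dominates. More precisely, $v+tBu_0\cdot$ etc. gives $L(K)\supseteq \{Bu\}+\ldots$. A cleaner argument is that $\mathrm{cone}\{Bu_0\}$ together with $\pm$ directions is needed. Observe $-Bu_0$ lies in $L(K)$ by taking $x$ large in directions where $A$ has negative diagonal. This is uncertain without Hurwitz, so I instead use the direct route via Theorem \ref{thrm:sep_non-strict} only where needed, and handle the two implications by hand where simpler.

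\emph{(ii)$\Rightarrow$(i).} Here $p>0$ and $p^TA\le-\mathbf{1}^T<0$, so $A$ is Hurwitz by the Metzler characterization. Along trajectories with $x(0)=0$ and $u\ge0$ we have $x\ge0$, and $\frac{d}{dt}p^Tx=p^TAx+p^TBu\le -\mathbf{1}^Tx+\gamma\mathbf{1}^Tu$. Integrating over $[0,T]$ and using $p^Tx(T)\ge0$ gives $\|x\|_{L_1[0,T]}\le\gamma\|u\|_{L_1[0,T]}$. Letting $T\to\infty$ yields the gain bound.

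\emph{(i)$\Rightarrow$(ii).} Since $A$ is Hurwitz and Metzler, $-A^{-1}\ge0$. The natural candidate is $p^T=\mathbf{1}^T(-A^{-1})$, which satisfies $p^TA=-\mathbf{1}^T$. It remains to show $p^TB\le\gamma\mathbf{1}^T$, i.e.\ $\mathbf{1}^T(-A^{-1})b_j\le\gamma$ for each column $b_j$. For this, take impulse-like inputs $u=e_j\,\mathbf{1}_{[0,\epsilon]}/\epsilon$. The resulting $L_1$ norm of $x$ tends to $\int_0^\infty\mathbf{1}^Te^{At}b_j\,dt=\mathbf{1}^T(-A^{-1})b_j$, using $e^{At}\ge0$. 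The gain bound then gives the inequality. Strict positivity $p>0$ also follows: $-A^{-1}\ge0$ is nonsingular, so no column of it vanishes, hence $\mathbf{1}^T(-A^{-1})>0$ componentwise.

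The main obstacle is presenting this as an instance of Theorem \ref{thrm:kyp_linear_cones}, as the section intends. That route requires controllability on $K$ and $L(K)=X$, which would need $Bu>0$ and a Hurwitz argument. I would keep the direct computation above as the fallback proof, and remark that it coincides with condition (iii) of Theorem \ref{thrm:kyp_linear_cones}: the requirement $m^*(x_0,u_0)\ge0$ whenever $Ax_0+Bu_0=0$ gives $\mathbf{1}^T(-A^{-1})Bu_0\le\gamma\mathbf{1}^Tu_0$.
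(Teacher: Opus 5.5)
Your proof is correct, and for (i)$\Rightarrow$(ii) it takes a genuinely different route from the paper. The paper never writes down a certificate. It uses the gain bound to verify the equilibrium condition $m^*(x_0,u_0)\ge0$ for all $(x_0,u_0)\ge0$ with $Ax_0+Bu_0=0$, by driving the system from $x(0)=0$ with the constant input $u_0$ and then switching it off. It then obtains $p$ abstractly from the separating-hyperplane Theorem \ref{thrm:sep_non-strict}, and only afterwards gets $p>0$ from $p^T\ge\mathbf{1}^T(-A^{-1})$.

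You instead exhibit that minimal certificate $p^T=\mathbf{1}^T(-A^{-1})$ directly. This reduces everything to the DC-gain bound $\mathbf{1}^T(-A^{-1})B\le\gamma\mathbf{1}^T$, which is the same equilibrium condition made explicit through the invertibility of $A$. Your test-input step does not even need a limit: by Tonelli, $\int_0^\infty x\,dt=(-A^{-1})B\int_0^\infty u\,dt$ for every nonnegative $u\in L_1^m[0,\infty)$. For (ii)$\Rightarrow$(i) you argue as the paper does, with storage function $p^Tx$. Your use of $p^Tx(T)\ge0$ is slightly cleaner than the paper's appeal to $x(\infty)=0$.

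What your route buys: it is elementary and constructive, and it never uses the hypothesis that $Bu>0$ for some $u\ge0$. The paper needs that hypothesis only to secure $L(K)=X$ for Theorem \ref{thrm:sep_non-strict}, so your proof shows it can be dropped. The paper's route, in exchange, presents the result as an instance of the general cone framework. That is the purpose of the section, and it also covers cases (such as the KYP Lemma) where the equilibria have no explicit parametrization.

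Do delete your abandoned paragraph on checking hypotheses; it is garbled. In fact $L(K)=X$ is easy once $A$ is known to be Hurwitz, which holds in both directions. Then $-A^{-1}\ge0$ gives $-\mathbb{R}^n_+\subseteq L(K)$, while $Bu_0$ is interior to $\mathbb{R}^n_+$. Also, the paper uses Theorem \ref{thrm:sep_non-strict} and Corollary \ref{cor:dissipation} here, not Theorem \ref{thrm:kyp_linear_cones}, so no controllability on $K$ is required.
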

\begin{proof}
    See Section \ref{sec:Proofs}. 
\end{proof}

In the second result, we choose $K$ as the positive semidefinite cone in $\mathbb{S}^{n + m}$ and recover a non-strict version of the Kalman-Yakubovich-Popov (KYP) Lemma. For this purpose, we shall require the following crucial decomposition of trajectories on $\mathcal{S}_+^{n + m}$.
\begin{theorem} \label{thrm:rank_one}
    Given $A \in \mathbb{R}^{n \times n}$, $B \in \mathbb{R}^{n \times m}$ and $Q: \mathbb{R} \rightarrow \mathcal{S}_{+}^{n + m}$ with piecewise real analytical entries, the following conditions are equivalent:
	\begin{enumerate}
		\item[(i)] $Q$ satisfies
        \begin{equation} \label{eq:lem_matrix_dynamics}
		    \frac{\mathrm{d}}{\mathrm{dt}} \begin{pmatrix} I & 0 \end{pmatrix} Q \begin{pmatrix} I \\ 0 \end{pmatrix} = \begin{pmatrix} A & B \end{pmatrix} Q \begin{pmatrix} I \\ 0 \end{pmatrix} + \begin{pmatrix} I & 0 \end{pmatrix} Q \begin{pmatrix} A^T \\ B^T \end{pmatrix}.
        \end{equation}
        \item[(ii)] There exist $n + m$ functions $x_i: \mathbb{R} \rightarrow \mathbb{R}^n$ and $u_i: \mathbb{R} \rightarrow \mathbb{R}^m$ such that $$Q = \sum_{i = 1}^{n + m} \begin{pmatrix} x_i \\ u_i \end{pmatrix} (x_i^T \; u_i^T )$$ and $\dot{x}_i = Ax_i + Bu_i$ on $\mathbb{R}$ for all $1 \leq i \leq n$ and $x_i = 0$ otherwise.
    \end{enumerate}
\end{theorem}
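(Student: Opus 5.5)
The implication (ii)$\Rightarrow$(i) I would verify by direct computation. Write $Q = \begin{pmatrix} P & S \\ S^T & R\end{pmatrix}$ with $P = Q_{nn}$, $S = Q_{nm}$, $R = Q_{mm}$, so that (\ref{eq:lem_matrix_dynamics}) reads $\dot P = (AP + BS^T) + (PA^T + SB^T)$. From the decomposition in (ii), and since $x_i = 0$ for $i > n$, we get $P = \sum_{i\le n} x_i x_i^T$ and $S = \sum_{i \le n} x_i u_i^T$. Hence
$$\dot P = \sum_{i \le n}\big[(Ax_i + Bu_i)x_i^T + x_i(Ax_i+Bu_i)^T\big] = AP + BS^T + PA^T + SB^T,$$
and this holds on each interval where the $x_i$ are differentiable. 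The terms with $i > n$ contribute only to $R$ and do not enter the equation.

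For (i)$\Rightarrow$(ii) I would look for matrix functions $X:\mathbb{R}\to\mathbb{R}^{n\times n}$ and $U:\mathbb{R}\to\mathbb{R}^{m\times n}$ satisfying
$$XX^T = P, \qquad XU^T = S, \qquad \dot X = AX + BU.$$
The columns of $X$ and $U$ then give $x_i, u_i$ for $i \le n$. The residual $R - U^TU$ is handled afterwards. By Lemma \ref{lem:image_partition}, $\mathrm{Im}(S) \subseteq \mathrm{Im}(P) = \mathrm{Im}(X)$, so the natural choice is $U^T = X^{+}S$, with $X^+$ the Moore--Penrose pseudoinverse. Then $XU^T = XX^+S = S$ and $U^TU = S^T P^+ S$. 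The residual $R - S^TP^+S$ is the generalized Schur complement of $Q \succeq 0$, hence positive semidefinite. Factoring it as $\sum_{i=n+1}^{n+m} u_i u_i^T$, with $x_i = 0$ for those indices, completes the decomposition.

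It remains to construct $X$, and I plan to do so by solving the nonlinear ODE
$$\dot X = AX + BS^T (X^+)^T$$
on each interval of constant rank of $P$. The initial value is any factor with $X(t_0)X(t_0)^T = P(t_0)$. Let $Y = XX^T$ and let $\Pi$ be the orthogonal projection onto $\mathrm{Im}(X)$. Then $Y$ satisfies
$$\dot Y = AY + YA^T + B S^T\Pi + \Pi S B^T.$$
I would show that $Y \equiv P$ through a uniqueness argument for this coupled system. On a constant-rank interval, the solution $X$ has constant rank as long as $Y = P$, so $\mathrm{Im}(X) = \mathrm{Im}(P)$ and $\Pi S = S$. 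Consequently $P$ itself solves the same equation, and uniqueness then forces $Y = P$. If the coupled uniqueness proves awkward, the fallback is to construct $X$ directly as $X = P^{1/2}W$ with $W$ orthogonal. One then solves for $W$ from the requirement that $\dot X - AX - BU$ vanish; the symmetric part of this requirement is exactly (\ref{eq:lem_matrix_dynamics}), and the skew part gives a linear ODE for $W$.

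I expect the main obstacle to be the points where the rank of $P$ changes, where $X^+$ is discontinuous and the ODE degenerates. Here the piecewise real-analytic hypothesis is essential. On each analytic piece, rank changes occur only at isolated points. Moreover, Rellich's theorem gives analytic eigenvectors and eigenvalues of $P$ (and of the Schur complement), so analytic factors $X$ and $U$ exist that extend continuously across the degenerate points. The dynamics $\dot X = AX + BU$ then holds on the open subintervals. To conclude that it holds in the integral sense on all of $\mathbb{R}$, I would use continuity of $X$ together with the definition in Section \ref{Section:Preliminaries}. The delicate step is showing that the factor can be continued through a rank drop while keeping $X$ continuous; I would attempt this by taking limits of the analytic eigen-decomposition at the singular point.
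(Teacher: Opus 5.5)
Your proof of (ii)$\Rightarrow$(i) and the algebraic skeleton of (i)$\Rightarrow$(ii) agree with the paper. That skeleton is Lemma~\ref{lem:image_partition}, the choice $U^T = X^+S$, and the generalized Schur complement as residual. (Note that $UU^T$, not $U^TU$, equals $S^TP^+S$.)

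On a constant-rank interval, however, your ODE $\dot X = AX + BS^T(X^+)^T$ is nonlinear. Standard existence and uniqueness theory does not cover it, because $X\mapsto X^+$ is discontinuous at every rank-deficient $X$. The sentence ``$X$ has constant rank as long as $Y = P$, so $\Pi S = S$'' also presupposes what it is meant to prove. The repair, which is the paper's Step~1, is to put the \emph{known} $P$ into the coefficient. Solve the linear ODE $\dot X = FX$, with $F = A + BS^TP^+$ (continuous where $P$ has constant rank) and $X(t_0)X(t_0)^T = P(t_0)$. Because $S^TP^+P = S^T$, condition (i) reads $\dot P = FP + PF^T$, and $XX^T$ satisfies the same linear equation. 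Uniqueness then gives $XX^T = P$. It follows that $(X^+)^T = P^+X$, so $U = S^TP^+X$ is exactly your $U$.

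The genuine gap is at the rank-change points. Rellich's theorem gives analytic eigendecompositions of $P$, but a factor read off from them need not satisfy $\dot X = AX + BU$. For example, take $B = 0$, $A$ skew and $Q \equiv \mathrm{diag}(I,0)$: $X \equiv I$ is an analytic eigen-factor of $P \equiv I$, whereas the factor obeying the dynamics is $e^{At}$. So limits of the eigendecomposition say nothing about whether the factor solving $\dot X = FX$ has one-sided limits at an endpoint of its interval, where $P^+$, and hence $F$, may blow up.

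The missing idea is an estimate you already have: $UU^T = S^TP^+S \preceq Q_{mm}$. Hence $U$ stays bounded near the endpoint, and so does $X$, since $XX^T = P$. Therefore $\dot X = AX + BU$ is bounded, $X$ is Lipschitz, and its one-sided limits exist. At a junction $t_*$, the limits from the two sides satisfy $X_-(t_*)X_-(t_*)^T = P(t_*) = X_+(t_*)X_+(t_*)^T$. So some constant orthogonal $W$ gives $X_+(t_*)W = X_-(t_*)$. Replacing $X_+$ by $X_+W$ preserves $XX^T = P$, the linear ODE, and the formula $U = S^TP^+X$. Piecewise analyticity keeps junctions from accumulating, so induction in both directions produces a continuous $X$ on $\mathbb{R}$ satisfying the dynamics in the integral sense; this is the paper's Steps~2--3. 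Your fallback $X = P^{1/2}W$ is fine on constant-rank intervals, but it faces the same endpoint question, which this bound settles.
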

\begin{proof}
    See Section \ref{sec:Proofs}.
\end{proof}

\begin{remark}
    Note that the regularity assumption imposed on $Q$ in Theorem \ref{thrm:rank_one} is ultimately not necessary, but allows for a more intuitive proof which results in a more explicit construction of the terms in condition (ii). In fact, a similar statement to Theorem \ref{thrm:rank_one} will hold more generally for locally (square-)integrable functions and can be proved using techniques from functional analysis. Conversely, there is no reason for the purpose of this paper that the relevant functions in the previous definitions cannot also be assumed to be well-behaved in this manner.
\end{remark}

As a first application of Theorem \ref{thrm:rank_one}, we connect controllability to K-controllability on $\mathcal{S}_+^{n + m}$. For this purpose, define $E, L \in B(Z, X)$ in the natural way from (\ref{eq:lem_matrix_dynamics}) in Theorem \ref{thrm:rank_one} such that (\ref{eq:lem_matrix_dynamics}) can be written as $\frac{d}{dt}E(Q) = L(Q)$. We then have the following.
\begin{corollary} \label{cor:k_reach}
    $(A, B)$ is controllable if and only if $(E, L)$ is controllable on $\mathcal{S}_{+}^{n+m}$.
\end{corollary}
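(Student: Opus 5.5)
We prove the two directions separately, with Theorem \ref{thrm:rank_one} as the bridge between trajectories of $(E,L)$ on $\mathcal{S}_+^{n+m}$ and trajectories of $\dot x = Ax+Bu$. Here $Z = \mathcal{S}^{n+m}$, $X = \mathcal{S}^n$, $E(Q) = Q_{nn}$ and $L(Q) = (A\;B)Q\begin{pmatrix} I\\0\end{pmatrix} + (I\;0)Q\begin{pmatrix}A^T\\B^T\end{pmatrix}$. Since $E(\mathcal{S}_+^{n+m}) = \mathcal{S}_+^n$, controllability on the cone means: for any $X_0, X_1 \succeq 0$ there is a trajectory $Q(t)\succeq 0$ on $[0,t_1]$ with $Q_{nn}(0)=X_0$ and $Q_{nn}(t_1)=X_1$.

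\emph{($\Rightarrow$)} Assume $(A,B)$ is controllable. Write $X_0 = \sum_{i=1}^n a_i a_i^T$ and $X_1 = \sum_{i=1}^n b_i b_i^T$ via spectral decompositions, padding with zero vectors so both sums have exactly $n$ terms. Fix $t_1>0$. By standard controllability, for each $i$ there is an input $u_i$ on $[0,t_1]$ steering $x_i(0)=a_i$ to $x_i(t_1)=b_i$. We take $u_i$ of the usual Gramian form $u_i(t) = B^Te^{A^T(t_1-t)}c_i$, so that $u_i$ is real analytic. Then set
\[
Q(t)=\sum_{i=1}^n \begin{pmatrix}x_i\\u_i\end{pmatrix}\begin{pmatrix}x_i^T & u_i^T\end{pmatrix}.
\]
This is a continuous function with values in $\mathcal{S}_+^{n+m}$, and $Q_{nn}(0)=X_0$, $Q_{nn}(t_1)=X_1$. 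Either the direction (ii)$\Rightarrow$(i) of Theorem \ref{thrm:rank_one} (after extending the data to all of $\mathbb{R}$, e.g. using the same analytic formulas) or a direct product-rule computation shows that $\frac{d}{dt}\sum x_ix_i^T = \sum (Ax_i+Bu_i)x_i^T + x_i(Ax_i+Bu_i)^T$, which is exactly $L(Q)$. This direction is routine.

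\emph{($\Leftarrow$)} Assume $(E,L)$ is controllable on the cone, and let $v\in\mathbb{R}^n$ be arbitrary. Apply cone controllability with $X_0=0$ and $X_1=vv^T$ to obtain $Q$ on $[0,t_1]$. The main obstacle is that Theorem \ref{thrm:rank_one} requires $Q$ to be defined on all of $\mathbb{R}$ and to be piecewise real analytic, whereas Definition \ref{def:controllability} only guarantees continuity. Two remedies are possible. The first is to invoke the remark after Theorem \ref{thrm:rank_one}, which asserts the decomposition for general locally integrable $Q$. The second, which avoids the decomposition altogether, is a direct reachability argument. Suppose $(A,B)$ is not controllable. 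Then there is $w\neq 0$ with $w^Te^{At}B=0$ for all $t$, equivalently $w^TA^k B = 0$ for all $k$. Set $P(t)=Q_{nn}(t)$. Then $\dot P = AP+PA^T + BQ_{mn}+Q_{nm}B^T$, and variation of constants gives
\[
P(t_1)=e^{At_1}P(0)e^{A^Tt_1}+\int_0^{t_1} e^{A(t_1-s)}\big(BQ_{mn}+Q_{nm}B^T\big)e^{A^T(t_1-s)}\,ds .
\]
With $P(0)=0$, we get $w^TP(t_1)w = 2\int_0^{t_1} w^Te^{A(t_1-s)}BQ_{mn}(s)e^{A^T(t_1-s)}w\,ds = 0$, because $w^Te^{A\tau}B=0$. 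Choosing $v=w$ gives $w^TP(t_1)w = \lVert w\rVert^4 \neq 0$, a contradiction. This argument never uses the rank one decomposition, only the block structure and positivity of $Q$, so I would present it as the main proof of ($\Leftarrow$). If instead the Theorem \ref{thrm:rank_one} route is preferred, the decomposition immediately gives $vv^T=\sum x_i(t_1)x_i(t_1)^T$ with every $x_i(0)=0$. This places $v$ in the reachable subspace from the origin, which is the controllable subspace, and since $v$ was arbitrary, $(A,B)$ is controllable.
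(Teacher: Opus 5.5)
Your proof is correct. The ($\Rightarrow$) direction matches the paper's: spectral decompositions of $X_0,X_1$, steering each rank-one factor by ordinary controllability, then stacking and summing. You add two useful details, a common $t_1>0$ and a direct product-rule check of the dynamics.

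For ($\Leftarrow$) you take a genuinely different route. The paper applies Theorem \ref{thrm:rank_one} to the cone trajectory from $0$ to $\hat x\hat x^T$. It notes that every factor starts at $x_i(0)=0$ and ends proportional to $\hat x$, then rescales one $u_i$ to reach $\hat x$. This fits the paper's theme that the rank-one decomposition transfers cone statements back to $\dot x=Ax+Bu$, and it yields an explicit steering input. You instead use an invariance argument: for $w\neq 0$ annihilating $e^{At}B$, variation of constants gives $w^TQ_{nn}(t_1)w=w^Te^{At_1}Q_{nn}(0)e^{A^Tt_1}w$, so $Q_{nn}$ cannot move from $0$ to $ww^T$.

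Your argument is more elementary. It needs only that $Q$ be continuous on $[0,t_1]$, which is exactly what Definition \ref{def:controllability} supplies. The paper, by contrast, applies Theorem \ref{thrm:rank_one}, which is stated for piecewise real-analytic $Q$ on all of $\mathbb{R}$, to such a trajectory. It then leans on the later remark to cover the gap you rightly point out.

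One minor correction: your argument never uses $Q\succeq 0$. It holds for every symmetric solution of (\ref{eq:lem_matrix_dynamics}), so ``block structure and positivity'' overstates what is needed.
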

\begin{proof}
    See Section \ref{sec:Proofs}.
\end{proof}

Theorem \ref{thrm:kyp_linear_cones}, Theorem \ref{thrm:rank_one} and Corollary \ref{cor:k_reach} now together give the following result \cite{willems1971least}\cite{rantzer1996kalman}\cite{megretski2010kyp}.
\begin{proposition} [KYP] \label{prop:kyp}
    Given $A \in \mathbb{R}^{n \times n}$, $B \in \mathbb{R}^{n \times m}$, $M = M^T \in \mathbb{R}^{(n + m) \times (n + m)}$ with $(A, B)$ controllable, the following statements are equivalent:
    \begin{enumerate}
        \item[(i)] There exists a matrix $P \in \mathbb{R}^{n \times n}$ such that $P = P^T$ and $$M + \begin{pmatrix}
            A^T P + PA & PB \\ B^T P & 0
        \end{pmatrix} \preceq 0.$$
        \item[(ii)] For all $x \in \mathbb{C}^n$ and $u \in \mathbb{C}^m$ such that either $x = 0$ or $i\omega x = Ax + Bu$ for some $\omega \in \mathbb{R}$, $$\begin{pmatrix}
            x \\ u
        \end{pmatrix}^T M \begin{pmatrix}
            x \\ u
        \end{pmatrix} \leq 0.$$ 
        \item[(iii)] For all $x \in L_2^n (-\infty , \infty)$ and $u \in L_2^m (-\infty , \infty)$ such that either $x = 0$ or $\dot{x} = Ax + Bu$ on $\mathbb{R}$,
        $$\int_{-\infty}^{\infty} \begin{pmatrix}
            x \\ u
        \end{pmatrix}^T M \begin{pmatrix}
            x \\ u
        \end{pmatrix} \; \mathrm{dt} \leq 0.$$
        \item[(iv)] For all $\omega \in \mathbb{R}$ such that $i \omega$ is not an eigenvalue of $A$, $$\begin{pmatrix}
            (i\omega I - A)^{-1}B \\ I \end{pmatrix}^{*}
        M \begin{pmatrix}
            (i\omega I - A)^{-1}B \\ I \end{pmatrix} \preceq 0.$$
    \end{enumerate}
\end{proposition}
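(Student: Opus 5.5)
The strategy is to place everything in the cone framework of Theorem \ref{thrm:kyp_linear_cones}. Take $Z = \mathcal{S}^{n+m}$, $K = \mathcal{S}_+^{n+m}$, $X = \mathcal{S}^n$, and let $E(Q) = \begin{pmatrix} I & 0\end{pmatrix} Q \begin{pmatrix} I \\ 0\end{pmatrix}$ with $L$ defined by the right-hand side of (\ref{eq:lem_matrix_dynamics}). Use the trace inner product, so that $m^*(Q) = -\mathrm{tr}(MQ)$ and $p^*(X) = \mathrm{tr}(PX)$ with $P = P^T$. A direct computation gives
\begin{equation*}
L^*(P) = \begin{pmatrix} A^TP + PA & PB \\ B^TP & 0 \end{pmatrix}.
\end{equation*}
Since $K^* = \mathcal{S}_+^{n+m}$ is self-dual, condition (i) of Theorem \ref{thrm:kyp_linear_cones} reads $L^*(P) + M \preceq 0$, which is exactly (i) of Proposition \ref{prop:kyp}. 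Corollary \ref{cor:k_reach} supplies controllability on $K$. Moreover, $E(K) = \mathcal{S}_+^n$ has nonempty interior in $\mathcal{S}^n$. Hence Theorem \ref{thrm:kyp_linear_cones} applies, and its conditions (i), (ii) and (iii) are equivalent.

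Next I translate the remaining conditions. Condition (iii) of Theorem \ref{thrm:kyp_linear_cones} asks that $\mathrm{tr}(MQ_0) \le 0$ for every $Q_0 \succeq 0$ with $L(Q_0) = 0$. To show (ii)$\Leftrightarrow$ this, let $x \in \mathbb{C}^n, u \in \mathbb{C}^m$ satisfy $i\omega x = Ax + Bu$. Put $v = (x;u)$ and $Q_0 = \mathrm{Re}(vv^*) \succeq 0$. Then the $(1,1)$ block of $(A\ B)vv^*$ is $i\omega xx^*$, which is skew-Hermitian, so $L(vv^*) = i\omega xx^* + (i\omega xx^*)^* = 0$. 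Taking real parts gives $L(Q_0) = 0$ and $v^*Mv = \mathrm{tr}(MQ_0)$. The case $x=0$ is covered by (iii) as well, since $Q_0 = \mathrm{diag}(0, \mathrm{Re}(uu^*))$ satisfies $L(Q_0)=0$. Thus cone condition (iii) implies Proposition \ref{prop:kyp}(ii).

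The converse needs a decomposition of $Q_0 \succeq 0$ with $L(Q_0)=0$ into rank-one terms $vv^*$ with $v$ of the above type. I expect this to be the main obstacle. My first attempt is to regard $Q_0$ as a constant trajectory: it satisfies (\ref{eq:lem_matrix_dynamics}) because $\frac{d}{dt}E(Q_0) = 0 = L(Q_0)$. Theorem \ref{thrm:rank_one} then gives $Q_0 = \sum_i (x_i;u_i)(x_i;u_i)^T$ with $\dot x_i = Ax_i + Bu_i$. Each $x_i$ is bounded, since $Q_0$ is constant and every summand is dominated by $Q_0$. I then expand each bounded solution in frequency: for constant $Q_0$ the construction in the proof of Theorem \ref{thrm:rank_one} should yield quasi-periodic $x_i$, which are sums of $e^{i\omega t}$ terms. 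Averaging over time should therefore write $\mathrm{tr}(MQ_0)$ as a nonnegative combination of values $v^*Mv$ with $v$ admissible for (ii).

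A fallback is purely algebraic. Write $Q_0 = \begin{pmatrix}X & Y\\ Y^T & U\end{pmatrix}$ and use $AX + BY^T + XA^T + YB^T = 0$ together with Lemma \ref{lem:image_partition} to factor $Y = XF$ locally. This gives $(A+BF^T)X + X(A+BF^T)^T = 0$ plus a residual term in $U$. Diagonalizing $X$ in a basis adapted to $A + BF^T$ should then produce the imaginary-axis eigenvector pairs.

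For (iii) of Proposition \ref{prop:kyp}, I match it to condition (ii) of Theorem \ref{thrm:kyp_linear_cones}. Given $L_2$ pairs $(x,u)$, set $Q = (x;u)(x;u)^T$. This $Q$ satisfies (\ref{eq:lem_matrix_dynamics}) and $\int\lVert Q\rVert\,dt < \infty$. Hence the cone condition (ii) gives $\int (x;u)^T M (x;u)\,dt \le 0$, so cone (ii) implies Proposition \ref{prop:kyp}(iii). Conversely, Theorem \ref{thrm:rank_one} decomposes any integrable trajectory $Q$ into a sum of such rank-one terms. Each $(x_i,u_i)$ lies in $L_2$, because its squared norm is bounded by $\mathrm{tr}\,Q$. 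Summing the inequalities gives the converse.

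Finally, (ii)$\Leftrightarrow$(iv) is elementary. When $i\omega \notin \sigma(A)$, the admissible pairs with $x\neq0$ are exactly $x = (i\omega I - A)^{-1}Bu$. When $i\omega \in \sigma(A)$, the pairs include eigenvector pairs with $u=0$; these are handled by a limit $\omega' \to \omega$ of nearby frequencies, using continuity of $v^*Mv$ and the closedness of the set of admissible $v$.
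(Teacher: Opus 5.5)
\textbf{There is a genuine gap: nothing you actually prove leads out of (ii) or (iv) back to (i) or (iii).} Several of your links are fine. Your embedding into Theorem \ref{thrm:kyp_linear_cones} is correct, and (i) is condition (i) of that theorem. Your step (iii) $\Rightarrow$ condition (ii) via Theorem \ref{thrm:rank_one} is exactly the paper's step. Your step condition (iii) $\Rightarrow$ (ii) via $Q_0=\mathrm{Re}(vv^*)$ is valid.

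Your only way back to (i) is (iv) $\Rightarrow$ (ii) $\Rightarrow$ condition (iii) of Theorem \ref{thrm:kyp_linear_cones}, and the second arrow is only conjectured. Sketch (a) does not go through as written. Theorem \ref{thrm:rank_one} only asserts $\dot x_i=Ax_i+Bu_i$, and a bounded state driven by a bounded input need not be a finite sum of $e^{i\omega t}$ terms. Quasi-periodicity would have to come from the feedback structure $u_i=R^Tx_i$ with $R$ constant, which you never exploit.

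The missing argument is short and purely static:
\begin{itemize}
\item Write $Q_0=\begin{pmatrix}X&Y\\Y^T&U\end{pmatrix}$. Lemma \ref{lem:image_partition} gives $Y=XR$, so $F=A+BR^T$ satisfies $FX+XF^T=0$.
\item Hence $\mathrm{Im}(X)$ is $F$-invariant, and $S=(X^{1/2})^\dagger F X^{1/2}$ is real skew-symmetric with $FX^{1/2}=X^{1/2}S$.
\item With the spectral decomposition $S=\sum_k i\omega_k P_k$ into Hermitian projections, $X=\sum_k (X^{1/2}P_k)(X^{1/2}P_k)^*$. Each column $x$ of $X^{1/2}P_k$ satisfies $i\omega_k x=Ax+B(R^Tx)$.
\item $U-R^TXR\succeq 0$ supplies the terms with $x=0$. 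By the splitting (\ref{eq:Q_decomp}), $Q_0$ is then a sum of $vv^*$ with admissible $v$, and (ii) gives $\mathrm{tr}(MQ_0)\le 0$.
\end{itemize}
This is the static analog of Theorem \ref{thrm:rank_one} that the paper only mentions in a remark. Your route needs it proved, not asserted.

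\textbf{Your step (iv) $\Rightarrow$ (ii) at frequencies with $i\omega\in\sigma(A)$ is also misjustified.} Closedness of the admissible set says that limits of admissible vectors are admissible. You need the converse: every admissible $v$ at $\omega_0$ must be a limit of admissible vectors at nearby $\omega'$ with $i\omega'\notin\sigma(A)$. This holds only because controllability (PBH test) gives $\ker\begin{pmatrix} i\omega I-A & -B\end{pmatrix}$ constant dimension $m$ for every $\omega$, so the kernel varies continuously in $\omega$. Without controllability the step is false. Take $n=m=1$, $A=B=0$, $M=\mathrm{diag}(1,-1)$: this satisfies (iv) but violates (ii) at $\omega=0$ with $(x,u)=(1,0)$. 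The case $x=0$, $Bu\neq 0$ is also missing; it needs the lower-right $m\times m$ block of $M$ to be negative semidefinite, obtained by letting $\omega\to\infty$ in (iv).

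\textbf{The paper avoids both difficulties} by closing the cycle as (i) $\Rightarrow$ (ii) $\Rightarrow$ (iv) $\Rightarrow$ (iii) $\Rightarrow$ (i). There, (iv) $\Rightarrow$ (iii) is Parseval's theorem, and the finitely many $\omega$ with $i\omega\in\sigma(A)$ form a null set. Adding that one step would make your equilibrium-point decomposition and the delicate (iv) $\Rightarrow$ (ii) unnecessary.
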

\begin{proof}
    See the end of this section. 
\end{proof}

Before we close the section with a proof to Proposition \ref{prop:kyp}, we provide the following remarks.
\begin{remark}
    Traditionally, the KYP Lemma is regarded as a bridge between state space formalism and the frequency domain, and is usually phrased in terms of an equivalence between the LMI in condition (i) and the set of frequency inequalities in condition (iv). From the perspective of the linear-cone theory advanced in Section \ref{Section:Results}, however, the frequency inequality is perhaps best viewed as part of a transition from the vector quadratic constraint in condition (ii) to the integral quadratic constraint in condition (iii), which incidentally manifests itself through the frequency domain and Parseval's theorem, an $L_2$-specific phenomenon. The cone analog to Proposition \ref{prop:kyp} is given by Theorem \ref{thrm:kyp_linear_cones}, and the above two conditions then correspond to the equilibrium point condition (iii) and the integral linear constraint condition (ii), respectively.
\end{remark}
\begin{remark} \label{rem:kyp_nontrivial}
    The nontrivial part of the KYP Lemma in Proposition \ref{prop:kyp} occurs in the unexpected fact that it is sufficient to consider only a subset of all trajectories $Q(t) \succeq 0$ when verifying the integral linear constraint in Theorem \ref{thrm:kyp_linear_cones}. In particular, this subset consists of those trajectories confined to the rank one part of the boundary of $\mathcal{S}_+^{n + m}$, thereby explaining the quadratic rather than linear form of the integrand in condition $(iii)$. Moreover, this central step corresponding to the direction $(iii) \Rightarrow (i)$ is enabled precisely by Theorem \ref{thrm:rank_one}. Note that this step is frequency independent.
\end{remark}
\begin{remark}
    As a complement to Remark \ref{rem:kyp_nontrivial}, we observe also a second dimension further enforcing it: there appears to exist a correspondence between complex vectors and real vector-valued functions of time. This can be observed already in the parallel conditions (ii) and (iii). In fact, the supplementary results in the well-known proof for the KYP Lemma in \cite{rantzer1996kalman} can be used to obtain a complex vector analog to the crucial Theorem \ref{thrm:rank_one}, in which a real positive semidefinite matrix satisfying an equilibrium point condition, as opposed to a matrix-valued function of time satisfying the corresponding dynamics, is decomposed into a sum of rank one matrices, the components of which are complex vectors satisfying the constraint in condition (ii) in Proposition \ref{prop:kyp}. This subsequently offers a parallel algebraic proof of the KYP Lemma in which the nontrivial consideration of only part of $\mathcal{S}_+^{n + m}$ as in Remark \ref{rem:kyp_nontrivial} occurs instead over the field of complex numbers. This corresponds to the direction $(ii) \Rightarrow (i)$ and is not seen in the proof given by \cite{rantzer1996kalman}, as the same fundamental components are executed in a different order.
\end{remark}
\begin{remark}
    Theorem \ref{thrm:rank_one} says that trajectories confined to $\mathcal{S}_+^{n + m}$ that satisfy system (\ref{eq:lem_matrix_dynamics}) can be decomposed into sums of rank one matrix trajectories, the corresponding vectors of which satisfy $x = 0$ or $\dot{x} = Ax + Bu$. Thus, in some sense, nothing new happens in the interior of the cone for the extended system (\ref{eq:lem_matrix_dynamics}), and dynamics on $\mathcal{S}_+^{n + m}$ can essentially be expressed in terms of the original system. A consequence is also that trajectories satisfying the original dynamics correspond to rank one matrix trajectories on the boundary of $\mathcal{S}_+^{n + m}$.
\end{remark}
\begin{remark}
    Theorem \ref{thrm:rank_one} appears to constitute a bridge between positive systems and cone intuition on the one hand, and linear-quadratic intuition on the other. For instance, it is the phenomenon that transfers linear functionals onto quadratic ones, as in the KYP Lemma. Another example is given by the transfer of standard controllability onto K-controllability as in Corollary \ref{cor:k_reach}.
\end{remark}

\begin{proof} \textbf{Proposition \ref{prop:kyp}}

\underline{$(i) \Rightarrow (ii)$:} Multiply the matrix in the LMI by $(x, u) \in \mathbb{C}^{n + m}$ from the right and $(x, u)^*$ from the left and note that the expression in condition (ii) follows immediately if $x = 0$, and otherwise also since
\begin{equation*}
\begin{aligned}
    \begin{pmatrix}
        x \\ u
    \end{pmatrix}^* \begin{pmatrix}
            A^T P + PA & PB \\ B^T P & 0
        \end{pmatrix}  \begin{pmatrix}
        x \\ u
    \end{pmatrix} &=
     (Ax + Bu)^* P x + x^* P (Ax + Bu) \\ &=
     -i\omega x^* P x + i\omega x^* Px = 0.
\end{aligned}
\end{equation*}

\underline{$(ii) \Rightarrow (iv)$:} Take any such $\omega \in \mathbb{C}$ and multiply the matrix in condition (iv) by $u \in \mathbb{C}^m$ from the right and $u^*$ from the left and note that the quadratic form in condition (ii) is obtained by setting $x :=(i\omega I - A)^{-1} B u$. If $x = 0$, condition (ii) can be invoked directly and if $x \ne 0$, note that the latter implies $i\omega x = Ax + Bu$ so that condition (iv) follows from (ii).

\underline{$(iv) \Rightarrow (iii)$:} Take any $x \in L_2^n (-\infty , \infty )$ and $u \in L_2^m (-\infty , \infty )$ such that $\dot{x} = Ax + Bu$ and note that as a consequence, the Fourier transforms satisfy $\hat{x} \in L_2^n (i \mathbb{R})$ and $\hat{u} \in L_2^m (i \mathbb{R})$. Thus, Parseval's theorem can be applied to the integral-quadratic form in condition (iii) to obtain a corresponding expression with $\hat{x}$ and $\hat{u}$ (offset by scaled identity if $M \succeq 0$ fails to hold). Now, since $\dot{x} = Ax + Bu$ implies $\hat{x}(i \omega ) = (i\omega I - A)^{-1} B \hat{u}(i \omega )$ a.e. (and in particular not at those finite number of $\omega$ which may correspond to imaginary axis eigenvalues of $A$), by condition (iv) the integrand will be nonpositive a.e. and hence also the integral. The case $x = 0$ follows immediately by noting that the lower-right $m \times m$ block of $M$ is negative semidefinite (let $\omega \rightarrow \infty$ in condition (iv)).

\underline{$(iii) \Rightarrow (i)$:} Take any $Q(t) \succeq 0$ that satisfies (\ref{eq:lem_matrix_dynamics}) with entries in $L_1 (-\infty , \infty )$ and invoke Theorem \ref{thrm:rank_one} to obtain a rank one decomposition of $Q$ such that the vector corresponding to each term satisfies either $\dot{x}_i = Ax_i + Bu_i$ or $x_i = 0$. Note now that since all entries in $Q$ are in $L_1 (-\infty , \infty )$ and the diagonal entries are sums of squares of entries of $x_i$ and $u_i$, the integral of each such term must be convergent and it follows that $x_i \in L_2^n (-\infty , \infty )$ and $u_i \in L_2^m (-\infty , \infty )$. Consequently, condition (iii) gives
\begin{equation*}
\begin{aligned}
\int_{-\infty}^{\infty} \mathrm{tr} ( M Q(t)) \; \mathrm{dt} &= \sum_{i = 1}^{n + m} \int_{-\infty}^{\infty} \mathrm{tr} \Bigg( M \begin{pmatrix}
        x_i (t) \\ u_i (t)
    \end{pmatrix}
        (x_i (t)^T u_i (t)^T ) \Bigg) \; \mathrm{dt} \\ &= \sum_{i = 1}^{n + m} \int_{-\infty}^{\infty} \begin{pmatrix}
        x_i (t) \\ u_i (t)
    \end{pmatrix}^T M \begin{pmatrix}
        x_i (t) \\ u_i (t)
    \end{pmatrix} \; \mathrm{dt} \leq 0
\end{aligned}
\end{equation*}
for all such $Q$. With $Z = \mathcal{S}^{m + n}$, $X = \mathcal{S}^n$, $K = \mathcal{S}_{+}^{n + m}$, $(E, L)$ as in ($\ref{eq:lem_matrix_dynamics}$) and  $m^* (Q) = \mathrm{tr}(-M Q)$, this means exactly that condition (ii) in Theorem \ref{thrm:kyp_linear_cones} is satisfied. To be clear, note that we equip $X$ with the norm induced by the standard trace inner product and $Z$ with the $L_1$-norm, i.e., $\lVert \cdot \rVert_Z$ sums the absolute valued matrix entries together. Note also that any functional $f^* (Q) = \mathrm{tr}(C Q)$ for some $C \in S^{n + m}$ is clearly linear and bounded so that $m^* \in Z^*$. Invoke now Corollary \ref{cor:k_reach} to obtain K-controllability and note that $E(K) = \mathcal{S}_+^n$ so that $E(K)$ has nonempty interior. Theorem \ref{thrm:kyp_linear_cones} may thus be applied to obtain a $p^* \in X^*$, therefore on the form $p^* (Q) = \mathrm{tr}(P Q)$ for some $P \in \mathcal{S}^n$, such that the conic inequality (\ref{eq:lmi}) holds. But with $U = (A \; B)$ and $V = (I \; 0)$, since by the linearity and permutation properties of the trace operator we have
\begin{equation*}
    \begin{aligned}
        &L^*(p^*)(Q) = p^* (L(Q)) = 
            \mathrm{tr}\big( P (UQV^T + VQU^T ) \big) = \mathrm{tr}\big( PUQV^T \big) + \\ & \mathrm{tr}\big( PVQU^T \big) = \mathrm{tr}\big(V^T PUQ \big) + \mathrm{tr}\big(U^T PV Q \big) = \mathrm{tr}\big((U^T PV + V^T PU) Q \big),
    \end{aligned}
\end{equation*}
the conic inequality (\ref{eq:lmi}) means that $\mathrm{tr}((U^T PV + V^T PU + M) Q) \leq 0$ for all $Q \succeq 0$. But this is equivalent to condition (i), as $\mathrm{tr}(CQ) \geq 0$ for all $Q \succeq 0$ if and only if $C \succeq 0$.
\end{proof}


\section{Proofs} \label{sec:Proofs}
In this section, we provide proofs to the rest of the results in the previous sections.

\begin{proof} \textbf{Theorem \ref{thrm:sep_non-strict}} and \textbf{Theorem \ref{thrm:sep_strict}}

    \underline{$(i) \Rightarrow (ii)$:} Take any $z_0 \in K$ such that $L(z_0 ) = 0$ and note that $L^* (p^* ) (z_0 ) = p^* (L(z_0 )) = 0$ so that $m^* (z_0 ) \geq 0$ and $m^* (z_0 ) > 0$ follow in the non-strict and strict case, respectively.

    \underline{$(ii) \Rightarrow (i)$:} In order to find a desired $p^*$, we show the existence of a separating hyperplane. We do this first in the case of non-strict inequality, and then in the strict case. 
    
    \textit{Non-strict inequality:} Define the two convex sets $$Q = \{ (L(z) , -m^* (z) ) \mid z \in K \} \subseteq X \times \mathbb{R}$$ and $$R = \{(0, v) \mid v > 0 \} \subseteq X \times \mathbb{R},$$ both clearly convex and nonempty, and suppose that $Q$ and $R$ are not disjoint. Then there exists $\hat{z} \in K$ such that $0 = L(\hat{z} )$ and $m^* (\hat{z}) < 0$, a contradiction by condition (ii). As such, there exists a hyperplane separating the two sets, i.e., there exist a $c \in \mathbb{R}$ and a nonzero $\hat{p}^* \in (X \times \mathbb{R} )^*$, with $\hat{p}^* (x, r) = p^* (x) + q(r)$ for some nonzero pair $(p^*, q ) \in X^* \times \mathbb{R}$ by identification, such that $qv \geq c$ for all $v > 0$ and $p^* ( L(z)) -qm^* (z) \leq c$ for all $z \in K$. Now, if $c > 0$, then a sufficiently small $v > 0$ can be chosen so as to violate $qv \geq c$, and if $c < 0$, then $z = 0$ can be chosen in the second inequality to give $0 \leq c < 0$, and so $c = 0$ must hold. Further, if $q < 0$ then any $v > 0$ will violate $qv \geq c = 0$, and if $q = 0$, then $p^* \ne 0$ and $p^* (L (z)) \leq 0$ must hold for all $z \in K$, a contradiction since $L(K) = X$ by assumption. It follows that $q > 0$, and so division by $q$ gives, after the relabeling $\frac{1}{q}p^* \rightarrow p^*$ and usage of the definition of adjoints, $L^* (p^*) (z) - m^* (z) = p^* (L(z)) - m^*(z) \leq 0$ for all $z \in K$, which is equivalent to condition (i).

    \textit{Strict inequality:} Define the two sets $$Q = \{ (L(z) , -m^* (z) ) \mid z \in K \cap B \} \subseteq X \times \mathbb{R},$$ where $B = \{ z \in Z \mid \lVert z \rVert_Z = 1\}$, and $$R = \{(0, v) \mid v \geq 0 \} \subseteq X \times \mathbb{R},$$ both clearly nonempty, and suppose that $\mathrm{conv}(Q)$ and $R$ are not disjoint. Then there exist $k$ elements $w_i \in Q$, $v_c \geq 0$ and $\alpha_i \geq 0$ with $\sum_{i = 1}^{k} \alpha_i = 1$ such that 
    \begin{equation} \label{eq:conv_eq}
    (0, v_c) = \sum_{i = 1}^{k} \alpha_i w_i = (L(z_c) , -m^* (z_c ) ),
    \end{equation}
    where $z_c = \sum_{i = 1}^{k} \alpha_i z_i$ for some $z_i \in K \cap B$. Now, being the convex combination of nonzero elements in a convex cone, preservation of nonnegative linear combinations along with pointedness implies that $z_c$ belongs to $K$ and is nonzero. Condition (ii) thus gives $m^* (z_c ) > 0$, a contradiction due to (\ref{eq:conv_eq}), and it follows that $\mathrm{conv}(Q)$ and $R$ are disjoint. 
    
    In the next step, we note first that $\mathrm{conv}(Q)$ is compact. This follows since $K \cap B$ is closed and bounded and hence compact in finite dimensions, implying that $Q$ is compact as the image of $K \cap B$ under a continuous transformation, noting also that the convex hull preserves compactness in finite dimensions. Now, since in addition $R$ is closed, there must exist a strictly separating hyperplane between the two convex sets and hence between $R$ and $Q$, i.e., there exist $c \in \mathbb{R}$ and nonzero $\hat{p}^* \in (X \times \mathbb{R} )^*$, with $\hat{p}^* (x, r) = p^* (x) + q(r)$ for some nonzero pair $(p^*, q ) \in X^* \times \mathbb{R}$ by identification, such that $qv > c$ for all $v \geq 0$ and $p^* ( L(z)) -qm^* (z) < c$ for all $z \in K \cap B$. If $c > 0$, then $v = 0$ causes a contradiction so that $p^* ( L(z)) -qm^* (z) < c \leq 0$. Similarly, if $q < 0$, then a sufficiently large $v > 0$ will contradict $qv > c$. Finally, if $q = 0$, then $p^* \ne 0$ and $p^* (L(z)) < 0$ for all $z \in K \cap B$. Since the latter is a compact set and $p^*$ and $L$ are continuous, $p^* (L(z))$ achieves its maximum in the image, which must therefore be negative. Thus, properly scaled by a constant $\beta > 0$, the maximum of $-m^* (z)$ over $K \cap B$ can be added to $p^* (L(z)) < 0$ without changing the negativity so that $p^* (L(z)) - \beta m^* (z) < 0$ for all $z \in K \cap B$. It follows after dividing by either $q$ or $\beta$ depending on if $q > 0$ or $q = 0$ that, after relabeling, $p^* ( L(z)) -m^* (z) < 0$ for all $z \in K \cap B$, which must in fact hold for all nonzero $z \in K$ since $\alpha z \in K \cap B$ for a suitable $\alpha > 0$. Altogether, this means exactly that $p^* ( L(z)) -m^* (z)$ belongs to the interior of $-K^*$, see Section \ref{Section:Preliminaries}.
\end{proof}

\begin{proof} \textbf{Corollary \ref{cor:nondym}}

    \underline{$(i) \Rightarrow (ii)$:} Since $L^* (p^*) (z) = p^* (L (z))$, we have $p^* (L(z)) - m^* (z) \leq 0$ for all $z \in K$. Taking any trajectory $z \in H$ and any associated $x \in H_z$ for which $\dot{x} = L(z)$, by integrating we obtain 
    \begin{equation*}
    \begin{aligned}
    &p^* \Bigg( \int_{t_0}^{t_1} L(z(t)) \; \mathrm{dt} \Bigg) - \int_{t_0}^{t_1} m^* (z(t)) \; \mathrm{dt} = p^* (x(t_1 )) - p^* (x(t_0 )) - \int_{t_0}^{t_1} m^* (z(t)) \; \mathrm{dt} \leq 0
    \end{aligned}
    \end{equation*}
    and so $V$ can be chosen as $p^*$.

    \underline{$(ii) \Rightarrow (i)$:} Because $H_l \subseteq H$, $H$ contains $\hat{z}(t) = z_0$ for a given $z_0 \in K$ such that $L(z_0 ) = 0$. Choosing an interval with $t_0 \ne t_1$, for any $x \in H_{\hat{z}}$ we thus have $$0 = V(x(t_1 ) ) - V(x(t_0 ) ) \leq \int_{t_0}^{t_1} m^* (\hat{z}(t)) \; \mathrm{dt} = m^* (z_0 ) (t_1 - t_0 )$$ because $\dot{x} = L(z_0 ) = 0$ so that $x(t_0 ) = x(t_1 )$. Theorem \ref{thrm:sep_non-strict} now gives condition (i).

    \underline{$(i) \Leftrightarrow (iii)$:} One direction follows via condition (ii) as $x$ can be chosen such that $x(t_1 ) = x(t_0 )$ by assumption; the other follows as in the direction $(ii) \Rightarrow (i)$.
\end{proof}

\begin{proof} \textbf{Corollary \ref{cor:dissipation}}
    
     By setting $H = \cup_{[t_0 , t_1]} \{ z: [t_0 , t_1 ] \rightarrow K \mid \frac{d}{dt} E(z) = L(z)\}$ and $H_z = \{E(z)\}$ and noting that clearly $H_l \subseteq H \subseteq H_u$, this follows immediately from Corollary \ref{cor:nondym}. Further, if the dissipation inequality holds for some $V$, then the conic inequality in condition (i) holds and a new $V$ can be chosen as in the proof of Corollary \ref{cor:nondym} as $V = p^* \in X^*$. 
\end{proof}

\begin{proof} \textbf{Theorem \ref{thrm:kyp_linear_cones}}

    Suppose first $L(K) \ne X$. Then, because $L(K)$ is a convex cone, there must exist $p^* \in X^*$ such that $p(L(z)) \leq 0$ for all $z \in K$. Choose now an interior point $x_0 \in E(K)$, which exists by assumption, and another sufficiently close point $x_1 \in E(K)$ so that $p(x_1 - x_0 ) > 0$. Next, use K-controllability to find $t_1 \geq 0$ and $z(t) \in K$ satisfying $\frac{d}{dt}E(z) = L(z)$ such that $E(z(0)) = x_0$ and $E(z(t_1 )) = x_1$. Now, by the linearity and continuity of $p^*$, we have 
    \begin{equation}
    \begin{aligned}
    0 < p^*(x_1 - x_0) &= p^* \Big( E(z(t_1 )) - E(z(0 )) \Big) = \int_{0}^{t_1} p^* \Bigg( \frac{d}{dt} E(z(t)) \Bigg) \; \mathrm{dt} \\ &= \int_{0}^{t_1} p^* (L(z(t))) \; \mathrm{dt} \leq 0,
    \end{aligned}
    \end{equation} a contradiction. As such, $L(K) = X$ and Corollary 1 and Theorem 1 may be invoked below.

    \underline{$(i) \Rightarrow (ii)$:} Take any $z$ as in condition (ii). By Corollary \ref{cor:dissipation}, $(E, L)$ satisfies the dissipation inequality (\ref{eq:dis_ineq}), and so for all restrictions of $z$ to the interval $[-T , T ]$, where $T > 0$, we have 
    \begin{equation*}
    \begin{aligned}
    V(E(z (T ))) - V(E(z (-T ))) &\leq \int_{-T}^{T} m^* (z (t)) \; \mathrm{dt} \leq \int_{-T}^{T} \big \lvert m^* (z(t)) \big \rvert \; \mathrm{dt} \\ &\leq \lVert m^* \rVert_{Z^* } \int_{-T}^{T} \lVert z (t) \rVert_{Z} \; \mathrm{dt}.
    \end{aligned}
    \end{equation*}
    Condition (ii) now follows from the continuity of $V$ by letting $T \rightarrow \infty$, as $z(\infty ) = z(-\infty ) = 0$ and the improper integral in question converges since by assumption $\int_{-\infty}^{\infty} \lVert z(t) \rVert_Z \mathrm{dt} < \infty$.

    \underline{$(ii) \Rightarrow (iii)$:} Suppose on the contrary that $m^* (z_0 ) < 0$ for some $z_0 \in K$ such that $L(z_0 ) = 0$. Invoke K-controllability to construct a trajectory $z$ which is zero for $t \leq 0$ and satisfies $z(t) = z_0$ for all $t \in [1, t_2 ]$ and $z(t) = 0$ again for all $t \geq t_2 + 1$. Choose finally a large enough $t_2$ so as to make the improper integral negative and violate condition (ii) so that in fact $m^* (z_0 ) \geq 0$ and condition (iii) follows. 
    
    \underline{$(iii) \Rightarrow (i)$:} This follows directly from Theorem \ref{thrm:sep_non-strict}. 
\end{proof}

\begin{proof} \textbf{Proposition \ref{prop:bounded_real_l1}}

    Choose $Z = \mathbb{R}^{n + m}$, $X = \mathbb{R}^n$, $L(x, u) = Ax + Bu$, $E(x, u) = x$, $m^* (x, u) = \gamma \mathbf{1}_m^T u - \mathbf{1}_n^T x$ and $K = \mathbb{R}_{+}^{n + m}$. Note also that since $L(\mathbb{R}_+^{n + m})$ is a convex cone which contains part of $\mathrm{Int}(\mathbb{R}_+^{n})$ by assumption, as well as the nonpositive orthant $-\mathbb{R}_+^{n}$ ($A$ is Metzler Hurwitz so that $-A^{-1} \geq 0$), we must have $L(\mathbb{R}_+^{n + m}) = X$, so that Theorem \ref{thrm:sep_non-strict} and Corollary \ref{cor:dissipation} can be invoked below.

    \underline{$(i) \Rightarrow (ii)$:} Note first that $\int_0^{\infty} m^* (x, u) \; \mathrm{dt} \geq 0$ for all nonnegative $u \in L_{1}^m [0, \infty )$ is equivalent to the supremum part of condition (i). Suppose now that there is some $z_0 = (x_0 , u_0) \geq 0$ with $L(z_0 ) = Ax_0 + Bu_0 = 0$ such that $m^*(z_0) < 0$. We note that the system behaves like an unforced system with equilibrium point at $x_0$ when $u(t) = u_0$, as $$\dot{\tilde{x}} = \dot{x} = Ax + Bu_0 = A(x - x_0) + Ax_0 + Bu_0 = A\tilde{x}$$ where $\tilde{x} = x - x_0$. Thus, since $A$ is Hurwitz, the trajectory starting at the origin ($\tilde{x} = -x_0$) will converge to $x_0$ ($\tilde{x} = 0$) and will additionally be confined to the nonnegative orthant by positivity/monotonicity. The integral over $m^*(x, u)$ can thus be made arbitrarily negative by letting $u(t) = u_0$ sufficiently long due to $m^*(x_0, u_0) < 0$, after which it can be completed into a nonnegative $L_{1} [0, \infty )$-trajectory by setting $u(t) = 0$, a contradiction by condition (i). Hence, $m^*(z_0) \geq 0$ and so by Theorem \ref{thrm:sep_non-strict}, there is $\hat{p}^* \in X^*$, i.e., $\hat{p}^* (x) = p^T x$, such that for all $z = (x, u) \in \mathbb{R}_+^{n + m}$, 
    \begin{equation} \label{eq:functional_to_vector}
    \begin{aligned}
    0 &\geq \hat{p}(L(x, u)) - m^*(x, u) = p^T (Ax + Bu) - (-\mathbf{1}_n^T x + \gamma \mathbf{1}_m^T u) \\ &= \big( p^T A + \mathbf{1}_n^T \; \; \; p^T B - \gamma \mathbf{1}_m^T \big) z
    \end{aligned}
    \end{equation} and the two inequalities in condition (ii) follow since $\mathbb{R}_+^{n + m}$ is self-dual. Finally, $p > 0$ follows from the upper equation, as $p^T \geq -\mathbf{1}_n^T A^{-1} > 0$, since being invertible, $A^{-1}$ can have no zero columns.

    \underline{$(ii) \Rightarrow (i)$:} Note first that $A^T p \leq -\mathbf{1}_n < 0$ for $A$ Metzler and $p > 0$ implies that $A$ is Hurwitz, e.g., \cite{rantzer2018tutorial}. Take now any nonnegative $u \in L_{1}^m [0, \infty )$ with a corresponding $x \in L_{1}^n [0, \infty )$ (since $A$ is Hurwitz) such that $\dot{x} = Ax + Bu$, and note that $x(t) \geq 0$ since the system is positive. In light of (\ref{eq:functional_to_vector}), invoke Corollary \ref{cor:dissipation} to find a continuous $V$ such that the restriction of $(x, u)$ to an interval $[0, T]$ with $T > 0$ satisfies the dissipation inequality (\ref{eq:dis_ineq}). As a result, by letting $T \rightarrow \infty$ and noting that $x(0) = x(\infty ) = 0$, we have $\int_0^{\infty} m^* (x, u) \; \mathrm{dt} \geq 0$ and therefore condition (i). 
\end{proof}

\begin{proof} \textbf{Corollary \ref{cor:k_reach}}
    
    Define $E$ and $L$ through (\ref{eq:lem_matrix_dynamics}) so that $\frac{d}{dt} E(z) = L(z)$. First, for any pair $X_0, X_1 \in E(\mathcal{S}_{+}^{n + m}) = \mathcal{S}_{+}^n$, we can perform a spectral decomposition on $X_0$ and $X_1$ and exploit controllability to find control inputs connecting the vectors in each rank one term to one another over finite time, say $t_1 \geq 0$. Stacking the resulting state trajectories and their corresponding control inputs, forming rank one matrices and summing up gives a desired $z$ by Theorem \ref{thrm:rank_one} and thus controllability on $\mathcal{S}_{+}^{n+m}$.

    For the converse, take any nonzero $\hat{x} \in \mathbb{R}^n$ and invoke K-controllability to obtain a time $t_1 \geq 0$ and a trajectory $z$ such that $E(z(0 )) = 0$ and $E(z(t_1 )) = \hat{x}\hat{x}^T \succeq 0$. By Theorem \ref{thrm:rank_one}, $z$ can be expressed as a sum of rank one terms with corresponding vectors satisfying either $x_i = 0$ or $\dot{x}_i = Ax_i + Bu_i$. Since $E(z(t_1 )) = \hat{x}\hat{x}^T \ne 0$, at least one of the latter kind must exist with $x_i \ne 0$ at $t = t_1$, and if several exist they must be proportional to some vector which by extension must also be proportional to $\hat{x}$. Any of the corresponding $u_i$ may then be taken with appropriate scaling, thus proving reachability and therefore controllability. 
\end{proof}

\begin{proof} \textbf{Theorem \ref{thrm:rank_one}}

\underline{$(i) \Rightarrow (ii)$}: Given the partition $$Q = \begin{pmatrix}
        Q_{nn} & Q_{nm} \\ Q_{nm}^T & Q_{mm}
    \end{pmatrix},$$ we show first how to obtain the desired $x_i$ and $u_i$ on any open interval $I$ on which $Q_{nn}(t)$ has constant rank (Step 1). Next, we invoke a boundedness property to argue that the relevant functions are well-behaved and possess one-sided limits at the boundary of $I$ (Step 2), and in the final step we apply induction to connect the $x_i$ in a continuous manner across the whole of $\mathbb{R}$ (Step 3).
    
    \textit{Step 1:} To this end, begin by noting that according to Lemma \ref{lem:image_partition}, $\mathrm{Im}(Q_{nm}(t)) \subseteq \mathrm{Im}(Q_{nn}(t))$ for all $t \in \mathbb{R}$ so that each column of $Q_{nm}(t)$ must belong to $\mathrm{Im}(Q_{nn} (t))$. But this means that $Q_{nm} (t) = Q_{nn}(t) R(t)$, where for example $R(t) = Q_{nn}^\dagger (t) Q_{nm}(t)$, with $Q_{nn}^\dagger$ denoting the pseudoinverse. As a consequence,
    \begin{equation} \label{eq:Q_decomp}
    Q = \begin{pmatrix}
        I \\ R^T
    \end{pmatrix} Q_{nn} \begin{pmatrix}
        I & R
    \end{pmatrix} + \begin{pmatrix}
        0 && 0 \\
        0 && Q_{mm} - R^T Q_{nn} R
    \end{pmatrix}.
    \end{equation}
    and so condition (i) implies that on those open intervals $I \subseteq \mathbb{R}$ on which $Q_{nn}$ has constant rank and $R(t)$ is therefore continuous, $Q_{nn}$ satisfies
    \begin{equation} \label{eq:Qnn_dynamics}
        \dot{Q}_{nn} = (A + BR(t)^T )Q_{nn} + Q_{nn}(A + BR(t)^T)^T .
    \end{equation}
    On the other hand, if $X: I \rightarrow \mathbb{R}^{n \times n}$ is the unique solution to $\dot{X} = (A + BR^T)X = AX + BR^T X$ with initial condition $X(t_0) = \sqrt{Q_{nn} (t_0 )}$, it follows that $XX^T$ is a solution to (\ref{eq:Qnn_dynamics}), and so by uniqueness $Q_{nn} = XX^T$. As such, on these intervals $x_i$ and $u_i$ can be chosen as the $i$:th column in $X$ and $R^T X$, respectively. The remaining $m$ terms in condition (ii) may then be selected following a spectral decomposition of the second term in (\ref{eq:Q_decomp}), which is well-behaved for symmetric and real-analytic matrix functions of time \cite{rellich1969perturbation}.

    \textit{Step 2:} Since $R(t)$ may grow unbounded as a result of a rank change in $Q_{nn}(t)$, we must verify that the relevant functions are well-behaved also on the closure of $I$. To this end, note that multiplication from the left and right by $(z, w) \in \mathbb{R}^{n + m}$ in (\ref{eq:Q_decomp}) gives the quadratic form $(z + Rw)^T Q_{nn}(z + Rw) + w^T (Q_{mm} - R^T Q_{nn} R)w$ at a given $t$, which is nonnegative for all $(z, w)$ and therefore also when $z = -Rw$. It follows that $Q_{mm} \succeq R^T Q_{nn} R = R^T XX^T R$, and so $R^T X$ is bounded. Since $Q_{nn} = XX^T$ so that $X$ is also bounded, $\dot{X} = AX + BR^T X$ is also bounded and consequently, by the generalized mean value theorem, $X$ will possess one-sided limits so that $X$ can be defined such that it be continuous on the closure of $I$.
    
    \textit{Step 3:} Although the desired $x_i$ and $u_i$ may now be constructed on the closure of $I$, $x_i$ may not necessarily connect continuously across such intervals. In order to show that this is indeed possible, partition the real line into closed intervals on the interior of which $Q_{nn}$ has constant rank, take any such interval and call it $I_0$, and number the remaining ones greater than $I_0$ by $I_i$ in increasing order. Importantly, due to real analyticity, there will be no infinite oscilliations towards an accumulation point and as such, $\cup_i I_i$ will cover the part of the real line greater than $I_0$. We now proceed inductively to construct a continuous $X: \cup_i I_i \rightarrow \mathbb{R}^{n \times n}$ such that $Q_{nn} = XX^T$ and $\dot{X} = (A + BR^T )X$ in the following manner: define $X_0 : I_0 \rightarrow \mathbb{R}^{n \times n}$ and $X_1 : I_1 \rightarrow \mathbb{R}^{n \times n}$ according to the above and note that at $I_0 \leq t_* \leq I_1$, we have $Q_{nn} = X_0 X_0^T = X_1 X_1^T$. This implies the existence of a $U U^T = I$ such that $X_1 (t_* ) U = X_0 (t_* )$, which can be seen by exploiting the polar decompositions of $X_0$ and $X_1$. We may thus obtain continuity on $I_0 \cup I_1$ by instead considering $\tilde{X}_1 = X_1 U$, which also satisfies $\dot{\tilde{X}}_1 = (A + BR^T )\tilde{X}_1$ and $Q_{nn} = X_1 U U^T X_1 ^T = \tilde{X}_1 \tilde{X}_1^T$ on $I_1$. Proceeding inductively by attaching additional intervals in precisely the same manner, a desired continuous $X$ may be constructed on the part of the real line greater than $I_0$; the same argument can also be repeated for the remainder of the real line less than $I_0$, and the proof is complete.
    
    \underline{$(ii) \Rightarrow (i)$}: Since $\dot{x}_i x_i^T + x_i \dot{x}_i^T = (Ax_i + Bu_i ) x_i^T + x_i (Ax_i + Bu_i )^T$ for all $i$, each term in condition (ii) clearly satisfies (\ref{eq:lem_matrix_dynamics}). The same thus holds for the sum $Q$, and condition (i) follows. 
\end{proof}


\section{Conclusions} \label{Section:Conclusions}
In this paper, we have considered integral linear constraints on sets of $Z$-valued trajectories constrained pointwise in time to a cone, where $Z$ is a finite-dimensional normed space. Importantly, the satisfaction thereof was found to be equivalent to the existence of a bounded linear functional $p^*$ satisfying a conic inequality. Notably, the sets of trajectories amenable to this equivalence can but must not in any way be connected to dynamics. Conversely, finding such a solution to the conic inequality establishes an integral linear constraint for a number of trajectory sets, including the ones with differential constraints which are often of interest in the context of dynamical systems. Moreover, parallels were drawn to the control literature by showing that the satisfaction of the conic inequality is equivalent to the satisfaction of the dissipation inequality with linear supply rate on a cone, corresponding to the well-known connection between LMIs and the dissipation inequality with quadratic supply rate.

The above results were subsequently leveraged in order to prove both an $L_1$-gain analog in positive systems theory to the well-known bounded real lemma, as well as a non-strict version of the KYP Lemma in linear-quadratic control. This contributes to drawing further parallels between and bringing these traditionally different areas together under a linear-conic framework. Furthermore, there is perhaps also a contribution in the above proof of the KYP Lemma in comparison to other already existing proofs. The proof essentially passes through a more basic cone analog of the KYP Lemma in which the characteristic quadratic costs over $\mathbb{R}^{n + m}$ now become linear over a cone, see Theorem \ref{thrm:kyp_linear_cones}. The KYP Lemma with its associated quadratic functionals is then obtained by applying a crucial rank one decomposition to matrix trajectories on the positive semidefinite cone, see Theorem \ref{thrm:rank_one}. Although there are arguably more straightforward and direct ways in which to proceed, this approach additionally provides structure and novel insights as opposed to rote theorem verification, see the remarks in Section \ref{Section:Results}. For example, it diminishes the role of the frequency inequality in favor of an
integral quadratic constraint formulation for the purpose of better understanding the phenomenon mathematically. In addition, algebraic proofs celebrated for their brevity tend to obscure the connection to dynamics by postponing it to the end when the frequency domain enters into the picture. This may lead one to think that the desired connection between LMIs and constraints on the behavior of a system is inaccessible, when in fact in a dynamics proof it is made early on and can be observed clearly in its simplicity on cones.

For future works, it would be interesting to unite additional results under a conic framework, as well as to pursue what may be a cone analog to linear-quadratic theory. This is already achieved for the special case of the dissipation inequality, and suggested in the case of K-controllability. As seen in previous work on the topic, cone-preservance and monotonicity are fundamental assumptions that will likely play an important role to this end.

\printbibliography

\end{document}